\newcommand{\blind}{0}
\numberwithin{equation}{section}
\newtheorem{theorem}{Theorem}
\newtheorem{lemma}{Lemma}
\newtheorem{condition}{Condition}
\theoremstyle{definition}
\newtheorem{corollary}{Corollary}
\newtheorem{eg}{Example}
\theoremstyle{remark}
\begin{document}

\def\spacingset#1{\renewcommand{\baselinestretch}%
{#1}\small\normalsize} \spacingset{1}


\if0\blind
{
  \title{\bf  On Prediction Properties of Kriging: Uniform Error Bounds and Robustness}
   \author[1]{Wenjia Wang\thanks{Wenjia Wang is a postdoctoral fellow in the Statistical and Applied Mathematical Sciences Institute, Durham, NC 27709, USA (Email: wenjia.wang234@duke.edu);}}
   \author[2]{Rui Tuo\thanks{Rui Tuo is Assistant Professor in Department of Industrial and Systems Engineering, Texas A\&M University, College Station, TX 77843, USA (Email: ruituo@tamu.edu); Tuo's work is supported by NSF grant DMS 1564438 and NSFC grants 11501551, 11271355 and 11671386. }}
\author[3]{C. F. Jeff Wu\thanks{C. F. Jeff Wu is Professor and Coca-Cola Chair in Engineering Statistics at School of Industrial and Systems Engineering, Georgia Institute of Technology, Atlanta, GA 30332, USA (Email: jeff.wu@isye.gatech.edu). Wu's work is supported by NSF grant DMS 1564438.\\ \indent The authors are grateful to an AE and referees for very helpful comments. }}
\affil[1]{The Statistical and Applied Mathematical Sciences Institute, Durham, NC 27709, USA}
\affil[2]{Department of Industrial and Systems Engineering, Texas A\&M University, College Station, TX 77843, USA}
\affil[3]{The H. Milton Stewart School of Industrial and Systems Engineering, Georgia Institute of Technology, Atlanta, GA 30332, USA}

\renewcommand\Authands{ and }
    \date{}
  \maketitle

} \fi
\date{}
\if1\blind
{
  \bigskip
  \bigskip
  \bigskip
  \begin{center}
    {\LARGE\bf  On Prediction Properties of Kriging: Uniform Error Bounds and Robustness}
\end{center}
  \medskip
} \fi

\bigskip
\begin{abstract}
Kriging based on Gaussian random fields is widely used in reconstructing unknown functions.
The kriging method has pointwise predictive distributions which are computationally simple.
However, in many applications one would like to predict for a range of untried points simultaneously. In this work we obtain some error bounds for the simple and universal kriging predictor under the uniform metric. It works for a scattered set of input points in an arbitrary dimension, and also covers the case where the covariance function of the Gaussian process is misspecified. These results lead to a better understanding of the rate of convergence of kriging under the Gaussian or the Mat\'ern correlation functions, the relationship between space-filling designs and kriging models, and the robustness of the Mat\'ern correlation functions.
\end{abstract}

\noindent%
{\it Keywords:}  Gaussian Process modeling; Uniform convergence; Space-filling designs; Radial basis functions; Spatial statistics.
\vfill

\newpage
\spacingset{1.45} 
\section{Introduction}
\label{sec:intro}

Kriging is a widely used methodology to reconstruct functions based on their scattered evaluations. Originally, kriging was introduced to geostatistics by \cite{matheron1963principles}. Later, it has been applied to computer experiments \citep{sacks1989design}, machine learning \citep{rasmussen2006gaussian}, small area estimation from survey data \citep{rao2015small}, and other areas.
With kriging, one can obtain an interpolant of the observed data, that is, the predictive curve or surface goes through all data points. Conventional regression methods, like the linear regression, the local polynomial regression \citep{fan1996local} and the smoothing splines \citep{wahba1990spline}, do not have this property. It is suitable to use interpolation  in spatial statistics and machine learning when the random noise of the data is negligible. The interpolation property is particularly helpful in computer experiments, in which the aim is to construct a surrogate model for a deterministic computer code, such as a finite element solver.

A key element of kriging prediction is the use of conditional inference based on Gaussian processes. At each untried point of the design region (i.e., domain for the input variables), the conditional distribution of a Gaussian process is normal with explicit mean and variance.
The pointwise confidence interval of the kriging predictor is then constructed using this conditional distribution.
In many applications, it is desirable to have a joint confidence region of the kriging predictor over a continuous set of the input variables such as an interval or rectangular region. The pointwise confidence interval for each design point cannot be amalgamated over the points in the design region to give a confidence region/limit with guaranteed coverage probability, even asymptotically. To address this question, it would be desirable to have a theory that gives good bounds on the worst (i.e., maximum) error of the kriging predictor over the design region. This bound can be useful in the construction of confidence regions with guaranteed coverage property, albeit somewhat conservatively.

In this work, we derive error bounds of the simple and universal kriging predictor under a uniform metric. The predictive error is bounded in terms of the maximum pointwise predictive variance of kriging.
A key implication of our work is to show that the overall predictive performance of a Gaussian process model is tied to the smoothness of the underlying correlation function as well as the space-filling property of the design (i.e., collection of the design points).
This has two major consequences. First, we show that a less smooth correlation function is more \textit{robust} in prediction, in the sense that prediction consistency can be achieved for a broader range of true correlation functions, while a smoother correlation function can achieve a higher rate of convergence provided that it is no smoother than the true correlation. 
Second, these error bounds are closely related to the \textit{fill distance}, which is a space-filling property of the design. This suggests that it makes a good design by minimizing its fill distance. We also prove a similar error bound for universal kriging with a random kernel function. In addition, our theory shows that the maximum likelihood estimator for the regression coefficient of universal kriging can be \textit{inconsistent}, which is a new result to the best of our knowledge.

This paper is organized as follows. In Section \ref{sec:preliminaries}, we review the mathematical foundation of simple kriging and state the objectives of this paper. 
In Section \ref{sec:bounds}, we present our main results on the uniform error bounds for kriging predictors. Comparison with existing results in the literature is given in Section \ref{sec:comp}. Some simulation studies are presented in Section \ref{sec:simulation}, which confirm our theoretical analysis. We extend our theory from simple kriging to universal kriging in Section \ref{sec:new}. Concluding remarks and discussion are given in Section \ref{sec:discussion}. Appendix \ref{App:proof} contains the proof of Theorem \ref{Th:main}, the main theorem of this work.  Appendices \ref{app:B}-\ref{app:D} consist of the proofs of Theorems \ref{Th:BLUP}-\ref{Th:randomkernel}, respectively. Some necessary mathematical tools are reviewed in the supplementary materials.

\section{Preliminaries and motivation}\label{sec:preliminaries}

In Sections \ref{sec:review} and \ref{sec:interpolant}, we review the kriging method and introduce some proper notation. In Section \ref{sec:unanswered}, we state the primary goal of our work.

\subsection{Review on the simple kriging method}
\label{sec:review}

Let $Z(\bm x)$ be a Gaussian process on $\mathbf{R}^d$. In this work, we suppose that $Z$ has mean zero and is \textit{stationary}, i.e., the covariance function of $Z$ depends only on the difference between the two input variables. Specifically, we denote
\begin{eqnarray*}
\text{Cov}(Z(\bm x),Z(\bm x'))=\sigma^2 \Psi(\bm x-\bm x'),
\end{eqnarray*}
for any $\bm x,\bm x'\in\mathbf{R}^d$, where $\sigma^2$ is the variance and $\Psi$ is the correlation function.  The correlation function should be positive definite and satisfy $\Psi(0)=1$. In particular, we consider two important families of correlation functions. The isotropic Gaussian correlation function is defined as
\begin{eqnarray}\label{gaussian}
\Psi(\bm x;\phi)=\exp\{-\phi \|\bm x\|^2\},
\end{eqnarray}
with some $\phi>0$, where $\|\cdot\|$ denotes the Euclidean norm.
The isotropic Mat\'ern correlation function \citep{santner2013design,stein2012interpolation} is defined as
\begin{eqnarray}\label{matern}
\Psi(\bm x;\nu,\phi)=\frac{1}{\Gamma(\nu)2^{\nu-1}}(2\sqrt{\nu}\phi \|\bm x\|)^\nu K_\nu(2\sqrt{\nu}\phi\|\bm x\|),
\end{eqnarray}
where $\phi,\nu>0$ and $K_\nu$ is the modified Bessel function of the second kind. The parameter $\nu$ is often called the smoothness parameter, because it determines the smoothness of the Gaussian process \citep{cramer2013stationary}.

Suppose that we have observed $Z(\bm x_1),\ldots,Z(\bm x_n)$, in which $\bm x_1,\ldots,\bm x_n$ are distinct points. We shall use the terminology in design of experiments \citep{wu2011experiments} and call $\{\bm x_1,\ldots,\bm x_n\}$ the \textit{design points}, although in some situations (e.g., in spatial statistics and machine learning) these points are observed without the use of design. In this paper, we do not assume any (algebraic or geometric) structure for the design points $\{\bm x_1,\ldots,\bm x_n\}$. They are called \textit{scattered points} in the applied mathematics literature.

The aim of \textit{simple kriging} is to predict $Z(\bm x)$ at an untried $\bm x$ based on the observed data $Z(\bm x_1),\ldots,Z(\bm x_n)$, which is done by calculating the conditional distribution. It follows from standard arguments \citep{santner2013design,banerjee2004hierarchical} that, conditional on $Z(\bm x_1),\ldots,Z(\bm x_n)$, $Z(\bm x)$ is normally distributed, with
\begin{eqnarray}
&\mathbb{E}[Z(\bm x)|Z(\bm x_1),\ldots,Z(\bm x_n)]=\bm r^T(\bm x) \mathbf{K}^{-1} \bm Y,& a.s.,\label{mean}\\
&\text{Var}[Z(\bm x)|Z(\bm x_1),\ldots,Z(\bm x_n)]=\sigma^2(1-\bm r^T(\bm x) \mathbf{K}^{-1} \bm r(\bm x)), & a.s.,\label{variance}
\end{eqnarray}
where $\bm r(\bm x)=(\Psi(\bm x-\bm x_1),\ldots,\Psi(\bm x-\bm x_n))^T, \mathbf{K}=(\Psi(\bm x_j-\bm x_k))_{j k}$ and $\bm Y=(Z(\bm x_1),\ldots,Z(\bm x_n))^T$.

The conditional expectation $\mathbb{E}[Z(\bm x)|Z(\bm x_1),\ldots,Z(\bm x_n)]$ is a natural predictor of $Z(\bm x)$ using $Z(\bm x_1),\ldots,Z(\bm x_n)$, because it is the best linear predictor \citep{stein2012interpolation,santner2013design}. It is worth noting that a nice property of the Gaussian process models is that the predictor (\ref{mean}) has an explicit expression, which explains why kriging is so popular and useful.

The above simple kriging method can be extended. Instead of using a mean zero Gaussian process, one may introduce extra degrees of freedom by assuming that the Gaussian process has an unknown constant mean. More generally, one may assume the mean function is given by a linear combination of known functions. The corresponding methods are referred to as ordinary kriging and universal kriging, respectively. A standard prediction scheme is the best linear unbiased prediction \citep{santner2013design,stein2012interpolation}. In this work, we shall first consider simple kriging in Sections \ref{sec:interpolant}-\ref{sec:simulation}, and then extend our results to universal kriging in Section \ref{sec:new}. This organization is based on the following reasons: 1)
the predictive mean of simple kriging (\ref{mean}) is identical to the radial basis function interpolant (see Section \ref{sec:interpolant}), which is an important mathematical tool which our theory relies on, 2) our main theorem for simple kriging (Theorem \ref{Th:main}) requires less regularity conditions than those for universal kriging, 3) our theory for simple kriging, together with the techniques we develop to prove Theorem \ref{Th:main}, serves as a basis for establishing the results for universal kriging.

\subsection{Kriging interpolant}
\label{sec:interpolant}

The conditional expectation in (\ref{mean}) defines an interpolation scheme. To see this, let us suppress the randomness in the probability space and then $Z(\bm x)$ becomes a deterministic function, often called a sample path. It can be verified that, as a function of $\bm x$, $\bm r^T \mathbf{K}^{-1} \bm Y$ in (\ref{mean}) goes through each $Z(\bm x_j), j=1,\ldots,n$.

The above interpolation scheme can be applied to an arbitrary function $f$. Specifically, given design points $\mathbf{X}=(\bm x_1,\ldots,\bm x_n)$ and observations $f(\bm x_1),\ldots,f(\bm x_n)$, we define the \textit{kriging interpolant} by
\begin{eqnarray}\label{interpolant}
\mathcal{I}_{\Psi,\mathbf{X}}f(\bm x)=\bm r^T(\bm x) \mathbf{K}^{-1} \bm F,
\end{eqnarray}
where $\bm r(\bm x)=(\Psi(\bm x-\bm x_1),\ldots,\Psi(\bm x-\bm x_n))^T, \mathbf{K}=(\Psi(\bm x_j-\bm x_k))_{j k}$ and $\bm F=(f(\bm x_1),\ldots,f(\bm x_n))^T$.
This interpolation scheme is also refered to as the \textit{radial basis function} interpolation \citep{wendland2004scattered}.
The only difference between (\ref{interpolant}) and (\ref{mean}) is that we replace the Gaussian process $Z$ by a function $f$ here. In other words,
\begin{eqnarray}
\mathbb{E}[Z(\bm x)|Z(\bm x_1),\ldots,Z(\bm x_n)]=\mathcal{I}_{\Psi,\mathbf{X}}Z(\bm x), & a.s.
\end{eqnarray}

As mentioned in Section \ref{sec:review}, the conditional expectation $\mathbb{E}[Z(\bm x)|Z(\bm x_1),\ldots,Z(\bm x_n)]$ is a natural predictor of $Z(\bm x)$. A key objective of this work is to derive a uniform bound of the predictive error of the kriging method, given by
$Z(\bm x)-\mathbb{E}[Z(\bm x)|Z(\bm x_1),\ldots,Z(\bm x_n)]$,
which is equal to $Z(\bm x)-\mathcal{I}_{\Psi,\mathbf{X}}Z(\bm x)$ almost surely.

In practice, $\Psi$ is usually unknown. Thus it is desirable to develop a theory that also covers the cases with misspecified correlations.
In this work, we suppose that we use another correlation function $\Phi$ for prediction. We call $\Psi$ the \textit{true correlation function} and $\Phi$ the \textit{imposed correlation function}. Under the imposed correlation function, the kriging interplant of the underlying Gaussian process becomes $\mathcal{I}_{\Phi,\mathbf{X}}Z(\bm x)$. In this situation, the interpolant cannot be interpreted as the conditional expectation. With an abuse of terminology, we still call it a kriging predictor.


\subsection{Goal of this work}
\label{sec:unanswered}

Our aim is to study the approximation power of the kriging predictor. For simple kriging, we are interested in bounding the \textit{maximum prediction error} over a region $\Omega$,
\begin{eqnarray}\label{aim}
\sup_{\bm x\in\Omega} |Z(\bm x)-\mathcal{I}_{\Phi,\mathbf{X}}Z(\bm x)|
\end{eqnarray}
in a probabilistic manner,
where $\Omega$ is the region of interest, also called the experimental region, and $\Omega\supset\{\bm x_1\ldots,\bm x_n\}$. For universal kriging, our aim is to bound a quantity similar to (\ref{aim}), but in which $\mathcal{I}_{\Phi,\mathbf{X}}Z(\bm x)$ should be replaced by the best linear unbiased predictor, or a more general predictor given by universal kriging with an estimated kernel function.

Our obtained results on the error bound in \eqref{aim} can be used to address or answer the following three questions.

First, the quantity (\ref{aim}) captures the worst case prediction error of kriging.  In many practical problems, we are interested in recovering a whole function rather than predicting for just one point. Therefore, obtaining uniform error bounds are of interest because they provide some insight on how we can modify the pointwise error bound to achieve a uniform coverage.

Second, we study the case of misspecified correlation functions. This address a common question in kriging when the true correlation function is unknown: how to gain model robustness under a misspecified correlation function and how much efficiency loss is incurred.

Third, our framework allows the study of an arbitrary set of design points (also called scattered points). Thus our results can facilitate the study of kriging with fixed or random designs. In addition, our theory can be used to justify the use of space-filling designs \citep{santner2013design}, in which the design points spread (approximatedly) evenly in the design region.

\section{Uniform error bounds for Simple kriging}
\label{sec:bounds}

This section contains our main theoretical results on the prediction error of  simple kriging.

\subsection{Error bound in terms of the power function}\label{sec:power}

It will be shown that, the predictive variance (\ref{variance}) plays a curial role on the prediction error, when the true correlation function is known, i.e., $\Phi=\Psi$. To incorporate the case of misspecified correlation functions, we define
the \textit{power function} as
\begin{eqnarray}\label{power}
P^2_{\Phi,\mathbf{X}}(\bm x)=1-\bm r^T(\bm x) \mathbf{K}^{-1} \bm r(\bm x),
\end{eqnarray}
where $\bm r(\bm x)=(\Phi(\bm x-\bm x_1),\ldots,\Phi(\bm x-\bm x_n))^T$, and $ \mathbf{K}=(\Phi(\bm x_j-\bm x_k))_{j k}$.

The statistical interpretation of the power function is evident. From (\ref{variance}) it can be seen that, if $\Psi=\Phi$, the power function is the kriging predictive variance for a Gaussian process with $\sigma^2=1$. Clearly, we have $P^2_{\Phi,\mathbf{X}}(\bm x)\leq 1$.

To pursue a convergence result under the uniform metric, we define
\begin{eqnarray}\label{eq:PphiX}
P_{\Phi,\mathbf{X}}:=\sup_{\bm x\in\Omega}P_{\Phi,\mathbf{X}}(\bm x).
\end{eqnarray}

We now state the main results on the error bounds for kriging predictors. Recall that the prediction error under the uniform metric is given by (\ref{aim}).

The results depend on some smoothness conditions on the imposed kernel. Given any function $f$, let $\tilde{f}$ be its Fourier transform. According to the inversion formula in Fourier analysis, $\tilde{\Psi}/(2\pi)^d$ is the spectral density of the stationary process $Z$ if $\Psi$ is continuous and integrable on $\mathbf{R}^d$.

\begin{condition}\label{C1}
	The kernels $\Psi$ and $\Phi$ are continuous and integrable on $\mathbf{R}^d$, satisfying
  \begin{eqnarray}\label{smoothness}
  \|\tilde{\Psi}/\tilde{\Phi}\|_{L_\infty(\mathbf{R}^d)}=:A_1^2<+\infty.
  \end{eqnarray}
In addition, there exists $\alpha\in(0,1]$, such that
	\begin{eqnarray}\label{differentiability}
	\int_{\mathbf{R}^d} \|\bm \omega\|^\alpha\tilde{\Phi}(\bm \omega) d \bm  \omega=:A_0<+\infty.
	\end{eqnarray}
\end{condition}



%
%
Now we are able to state the first main theorem of this paper. Recall that $\sigma^2$ is the variance of $Z(\bm x)$. The proofs of Theorem \ref{Th:main} and the theorems in Section \ref{sec:new} make extensive use of the scattered data approximation theory and a maximum inequality for Gaussian processes. Detailed discussions of relevant areas are given in, for example, \cite{wendland2004scattered} and \cite{van1996weak}, respectively. We also collect the required mathematical tools and results in the supplementary material.

\begin{theorem}\label{Th:main}
	Suppose Condition \ref{C1} holds, and the design set $\mathbf{X}$ is dense enough in the sense that $P_{\Phi,\mathbf{X}}$ defined in (\ref{eq:PphiX}) is no more than some given constant $C$. Then for any $u>0$, with probability at least $1-2\exp\{-u^2/(2A_1^2\sigma^2 P_{\Phi,\mathbf{X}}^2)\}$, the kriging prediction error has the upper bound
		\begin{eqnarray}\label{eq:thmBound}
			\sup_{\bm  x\in\Omega}|Z(\bm x)-\mathcal{I}_{\Phi,\mathbf{X}}Z(\bm x)|\leq K \sigma P_{\Phi,\mathbf{X}}\log^{1/2}(e/P_{\Phi,\mathbf{X}})+u.
		\end{eqnarray}
Here the constants $C,K>0$ depend only on $\Omega,\alpha, A_0$ and $A_1$.
\end{theorem}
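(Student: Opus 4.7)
The plan is to treat $e(\bm x):=Z(\bm x)-\mathcal{I}_{\Phi,\mathbf{X}}Z(\bm x)$ as a centered separable Gaussian field on $\Omega$ and to split its uniform norm into (i) an expected supremum, bounded via Dudley's entropy integral, and (ii) a concentration term, bounded via the Borell--TIS inequality. Condition~\ref{C1} is calibrated to supply exactly the two ingredients needed: (\ref{smoothness}) will produce a sharp pointwise variance bound for $e$, and (\ref{differentiability}) will produce a H\"older bound on its $L^2$-increments.

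The first step exploits the spectral representation $Z(\bm x)=\int e^{i\bm\omega^T\bm x}W(d\bm\omega)$ with intensity $\sigma^2(2\pi)^{-d}\tilde{\Psi}(\bm\omega)\,d\bm\omega$. Writing $\mathcal{I}_{\Phi,\mathbf{X}}Z(\bm x)=\sum_j\lambda_j(\bm x)Z(\bm x_j)$ with $\lambda_j(\bm x)=(\mathbf{K}^{-1}\bm r(\bm x))_j$ and setting $H(\bm\omega;\bm x):=e^{i\bm\omega^T\bm x}-\sum_j\lambda_j(\bm x)e^{i\bm\omega^T\bm x_j}$, Parseval yields
\[
\mathrm{Var}(e(\bm x))=\sigma^2(2\pi)^{-d}\int|H(\bm\omega;\bm x)|^2\tilde{\Psi}(\bm\omega)\,d\bm\omega,
\]
and the same identity with $\tilde{\Phi}$ in place of $\tilde{\Psi}$ recovers $P^2_{\Phi,\mathbf{X}}(\bm x)$. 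Applying (\ref{smoothness}) pointwise in $\bm\omega$ gives $\mathrm{Var}(e(\bm x))\le A_1^2\sigma^2 P^2_{\Phi,\mathbf{X}}(\bm x)\le A_1^2\sigma^2 P_{\Phi,\mathbf{X}}^2$, and this is precisely the variance proxy needed for Borell--TIS.

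For the increment estimate $d_e(\bm x,\bm y)^2:=\mathrm{Var}(e(\bm x)-e(\bm y))$, I would avoid a direct expansion (the weights $\lambda_j(\bm x)$ depend on $\bm x$ through a matrix inverse) and argue by orthogonal projection in the RKHS $\mathcal{N}_\Phi$. Under the identification $\delta_{\bm x}\leftrightarrow\Phi(\cdot,\bm x)$, the kriging normal equations $\sum_j\lambda_j(\bm x)\Phi(\bm x_j-\bm x_k)=\Phi(\bm x-\bm x_k)$ state exactly that $\xi_{\bm x}:=\delta_{\bm x}-\sum_j\lambda_j(\bm x)\delta_{\bm x_j}$ is the orthogonal projection of $\delta_{\bm x}$ onto $V^\perp$, where $V=\mathrm{span}\{\delta_{\bm x_k}\}_{k=1}^n$. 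By linearity, $\xi_{\bm x}-\xi_{\bm y}$ is the projection of $\delta_{\bm x}-\delta_{\bm y}$, so contractivity of projection gives
\[
\|\xi_{\bm x}-\xi_{\bm y}\|_\Phi^2\le\|\delta_{\bm x}-\delta_{\bm y}\|_\Phi^2=2(\Phi(0)-\Phi(\bm x-\bm y)).
\]
Since $\Phi(0)-\Phi(\bm h)=(2\pi)^{-d}\int(1-\cos(\bm\omega^T\bm h))\tilde{\Phi}(\bm\omega)d\bm\omega$, combining the elementary inequality $|1-\cos t|\le 2|t|^\alpha$ with (\ref{differentiability}) bounds this by $CA_0\|\bm h\|^\alpha$. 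A second application of (\ref{smoothness}) transfers the estimate to $\tilde{\Psi}$-energy, yielding the H\"older bound $d_e(\bm x,\bm y)\le L\|\bm x-\bm y\|^{\alpha/2}$ with $L$ depending only on $\sigma,\alpha,A_0,A_1$.

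With these two estimates in hand, the rest is essentially bookkeeping. The H\"older bound converts Euclidean covers of the bounded set $\Omega$ into $d_e$-covers, giving $\log N(\Omega,d_e,\epsilon)\le C\log(L/\epsilon)$, while the $d_e$-diameter of $\Omega$ is at most $2\sigma_*\le 2A_1\sigma P_{\Phi,\mathbf{X}}$. Dudley's integral therefore collapses to $\int_0^{2\sigma_*}\sqrt{\log(L/\epsilon)}\,d\epsilon\asymp\sigma_*\sqrt{\log(L/\sigma_*)}$, which simplifies to $O(\sigma P_{\Phi,\mathbf{X}}\log^{1/2}(e/P_{\Phi,\mathbf{X}}))$ as soon as $P_{\Phi,\mathbf{X}}$ is below a threshold depending only on $\Omega,\alpha,A_0,A_1$---precisely the hypothesis $P_{\Phi,\mathbf{X}}\le C$. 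Applying Borell--TIS separately to $e$ and $-e$ (identically distributed) and union-bounding produces the stated factor of $2$ and the Gaussian tail with variance proxy $A_1^2\sigma^2 P_{\Phi,\mathbf{X}}^2$. The main technical obstacle is the increment bound: the orthogonal-projection identity above is what sidesteps the cumbersome $\bm x$-dependence of $\lambda_j(\bm x)$ and reduces the whole argument to the two spectral conditions in (\ref{smoothness}) and (\ref{differentiability}).
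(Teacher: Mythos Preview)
Your proposal is correct and shares the same high-level architecture as the paper's proof: both control $\sup_{\bm x}|e(\bm x)|$ by combining Dudley's entropy integral (Lemma~\ref{Th:maximum}) with the Borell--TIS concentration inequality, and both obtain the diameter/variance bound $\sup_{\bm x}\mathrm{Var}(e(\bm x))\le A_1^2\sigma^2 P_{\Phi,\mathbf{X}}^2$ by exactly the spectral argument you describe (this is the paper's Step~2, equation~(\ref{eq:FrourierIdentity})).

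The genuine difference is in how the canonical-metric increment $\mathfrak{d}(\bm x,\bm x')$ is bounded. The paper (Step~1) expands $\mathfrak{d}(\bm x,\bm x')^2$ into four interpolation-error terms involving the auxiliary functions $h(\cdot)=\Psi(\bm x-\cdot)-\Psi(\bm x'-\cdot)$ and $h_1(\cdot)=\bm r^T(\bm x)\mathbf{K}^{-1}\bm r_1(\cdot)-\bm r^T(\bm x')\mathbf{K}^{-1}\bm r_1(\cdot)$, then applies the power-function bound (\ref{firstestimate}) to each term. This yields $\mathfrak{d}(\bm x,\bm x')^2\le C_3\,P_{\Phi,\mathbf{X}}\,\|\bm x-\bm x'\|^{\alpha/4}$, picking up an extra $P_{\Phi,\mathbf{X}}$ factor in the H\"older constant at the cost of a weaker exponent. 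Your RKHS-projection argument is cleaner: recognizing $\xi_{\bm x}$ as the orthogonal projection of $\delta_{\bm x}$ onto $V^\perp$ lets you invoke contractivity directly, giving $d_e(\bm x,\bm y)\le L\|\bm x-\bm y\|^{\alpha/2}$ with a better H\"older exponent but without the design-dependent prefactor. After integrating against the diameter $D\le 2A_1\sigma P_{\Phi,\mathbf{X}}$, both routes collapse to the same $P_{\Phi,\mathbf{X}}\log^{1/2}(e/P_{\Phi,\mathbf{X}})$ rate, and both require a threshold condition $P_{\Phi,\mathbf{X}}\le C$ to keep the argument of the logarithm bounded away from one. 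Your approach avoids the somewhat delicate manipulation of $h_1$ (equations (\ref{eq:h1norm2})--(\ref{eq:h1norm3}) in the paper) entirely; the paper's approach, on the other hand, makes the role of the interpolation error bound (\ref{firstestimate}) more explicit.
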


Theorem \ref{Th:main} presents an upper bound on the maximum prediction error of kriging. This answers the first question posed in Section \ref{sec:unanswered}. We will give more explicit error bounds in terms of the design $\mathbf{X}$ and the kernel $\Phi$ in Section \ref{sec:fill}. Theorem \ref{Th:main} can also be used to study the case of misspecified correlation functions, provided that condition (\ref{smoothness}) is fulfilled. Condition (\ref{smoothness}) essentially requires that the imposed correlation $\Phi$ is \textit{no smoother} than the true correlation function $\Psi$. Theorem \ref{Th:main} can also be used to address the third question posed in Section \ref{sec:unanswered}. Note that the right side of (\ref{eq:thmBound}) is a deterministic function depending on the design, and is decreasing in $P_{\Phi,\mathbf{X}}$ if $P_{\Phi,\mathbf{X}}$ is large enough. Therefore, it is reasonable to consider designs which minimize $P_{\Phi,\mathbf{X}}$. Such a construction depends on the specific form of $\Phi$. In Section \ref{sec:fill}, we will further show that, by maximizing certain space-filling measure, one can arrive at the optimal rate of convergence for a broad class of correlation functions.

From Theorem \ref{Th:main}, we also observe that the constant $A_1$ in (\ref{smoothness}) determines the decay rate of the maximum prediction error. In other words, the maximum prediction error appears more concentrated around its mean when the imposed kernel is closer to the true correlation function. Note that condition (\ref{differentiability}) requires a moment condition on the spectral density, which is fulfilled for any Mat\'ern or Gaussian kernel.

The non-asymptotic upper bound in Theorem \ref{Th:main} implies some asymptotic results which are of traditional interests in spatial statistics and related areas. For instance, suppose we adopt a classic setting of fixed-domain asymptotics  \citep{stein2012interpolation} in which the probabilistic structure of $Z(\bm x)$ and the kernel function $\Phi$ are fixed, and the number of design points increases so that $P_{\Phi,\mathbf{X}}$ tends to zero. 
Corollary \ref{Coro:1} is an immediate consequence of Theorem \ref{Th:main}, which shows the weak convergence and $L_p$ convergence of the maximum prediction error.

\begin{corollary}\label{Coro:1}
For fixed $\Psi,\Phi$, $\Omega$, and $\sigma$, we have the following asymptotic results
\begin{eqnarray}
\sup_{\bm x\in\Omega}|Z(\bm x)-\mathcal{I}_{\Phi,\mathbf{X}}Z(\bm x)|=O_\mathbb{P}(P_{\Phi,\mathbf{X}}\log^{1/2}(1/P_{\Phi,\mathbf{X}})),\label{rate}\\
\left(\mathbb{E}\left[\sup_{\bm x\in\Omega}|Z(\bm x)-\mathcal{I}_{\Phi,\mathbf{X}}Z(\bm x)|^p\right]\right)^{1/p}=O(P_{\Phi,\mathbf{X}}\log^{1/2}(1/P_{\Phi,\mathbf{X}})),\label{lp}
\end{eqnarray}
for any $1\leq p<+\infty$, as $P_{\Phi,\mathbf{X}}\rightarrow 0$.
\end{corollary}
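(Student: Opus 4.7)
The plan is to derive both statements as essentially routine consequences of the tail bound in Theorem \ref{Th:main}, treating the asymptotic regime $P_{\Phi,\mathbf{X}}\to 0$ so that eventually the hypothesis $P_{\Phi,\mathbf{X}}\le C$ of Theorem \ref{Th:main} is satisfied. Throughout let $W_n:=\sup_{\bm x\in\Omega}|Z(\bm x)-\mathcal{I}_{\Phi,\mathbf{X}}Z(\bm x)|$ and, to keep notation compact, write $P:=P_{\Phi,\mathbf{X}}$ and $a:=K\sigma P\log^{1/2}(e/P)$, so that Theorem \ref{Th:main} gives
\begin{equation*}
\mathbb{P}(W_n>a+u)\le 2\exp\bigl\{-u^2/(2A_1^2\sigma^2 P^2)\bigr\}\qquad\text{for every }u>0.
\end{equation*}
I would also note the elementary fact $\log(e/P)=1+\log(1/P)$, so $\log^{1/2}(e/P)$ and $\log^{1/2}(1/P)$ differ only by a bounded factor as $P\to 0$, allowing the ``$e$'' inside the log to be absorbed into the $O$-notation.

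For \eqref{rate}, I would apply the tail bound with $u=u_M:=M\sigma P\log^{1/2}(1/P)$ for a constant $M>0$ to be chosen. Then
\begin{equation*}
\mathbb{P}\bigl(W_n>(K+M)\sigma P\log^{1/2}(e/P)\bigr)\le\mathbb{P}(W_n>a+u_M)\le 2\exp\bigl\{-M^2\log(1/P)/(2A_1^2)\bigr\}.
\end{equation*}
Given any $\varepsilon>0$, choose $M$ with $M^2/(2A_1^2)\ge 1$ and $P$ small enough that $2P\le\varepsilon$; this forces the right-hand side below $\varepsilon$, which is exactly the definition of $W_n=O_\mathbb{P}(P\log^{1/2}(1/P))$.

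For \eqref{lp}, the idea is to convert the Gaussian-type tail into a moment bound via the layer-cake formula. Using $W_n\le a+(W_n-a)_+$ and Minkowski's inequality,
\begin{equation*}
(\mathbb{E}W_n^p)^{1/p}\le a+\bigl(\mathbb{E}(W_n-a)_+^p\bigr)^{1/p},
\end{equation*}
so it suffices to control the second term. By the layer-cake representation and the tail bound,
\begin{equation*}
\mathbb{E}(W_n-a)_+^p=\int_0^\infty pu^{p-1}\mathbb{P}(W_n>a+u)\,du\le 2p\int_0^\infty u^{p-1}\exp\bigl\{-u^2/(2A_1^2\sigma^2 P^2)\bigr\}du.
\end{equation*}
The substitution $v=u/(A_1\sigma P)$ shows this integral equals a universal constant (depending only on $p$, $A_1$, $\sigma$) times $P^p$, hence $(\mathbb{E}(W_n-a)_+^p)^{1/p}=O(P)$. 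Combining with $a=O(P\log^{1/2}(1/P))$ yields \eqref{lp}.

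The derivation is essentially mechanical; the only thing to be slightly careful about is the very mild one, namely keeping track of the $\log^{1/2}(e/P)$ versus $\log^{1/2}(1/P)$ distinction and ensuring all constants that appear depend only on the quantities allowed in the statement (namely $\Psi,\Phi,\Omega,p$), which follows because $K$, $A_1$, $\sigma$ are fixed throughout.
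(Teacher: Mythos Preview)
Your proof is correct and follows essentially the same approach as the paper: both parts are derived directly from the tail bound of Theorem~\ref{Th:main}, with the $L_p$ bound obtained by integrating that tail via the layer-cake formula. Your organization of the $L_p$ argument (first applying Minkowski to split off $a$, then integrating the tail of $(W_n-a)_+$) is a minor variant of the paper's version, which instead applies layer-cake to $W_n^p$ directly and splits the resulting integral at $a^p$; the two are equivalent in substance.
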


\begin{proof}
Theorem \ref{Th:main} implies (\ref{rate}) directly. For (\ref{lp}), it follows from
\begin{align*}
& \mathbb{E}\left[\sup_{\bm x\in\Omega}|Z(\bm x)-\mathcal{I}_{\Phi,\mathbf{X}}Z(\bm x)|^p\right]\\
= & \int_0^\infty \mathbb{P}(\sup_{\bm x\in\Omega}|Z(\bm x)-\mathcal{I}_{\Phi,\mathbf{X}}Z(\bm x)|>t^{1/p}) d t\\
= & \bigg(\int_0^{[K\sigma P_{\Phi,\mathbf{X}}\log^{1/2}(e/P_{\Phi,X})]^p} + \int_{[K\sigma P_{\Phi,\mathbf{X}}\log^{1/2}(e/P_{\Phi,X})]^p}^\infty \bigg)\mathbb{P}(\sup_{\bm x\in\Omega}|Z(\bm x)-\mathcal{I}_{\Phi,\mathbf{X}}Z(\bm x)|>t^{1/p}) d t \\
\leq & \left[K\sigma P_{\Phi,\mathbf{X}}\log^{1/2}(e/P_{\Phi,X})\right]^p+\int_0^\infty 2\exp\{-t^{2/p}/(2A_1^2\sigma^2P_{\Phi,\mathbf{X}}^2)\} d t\\
 = & O(P_{\Phi,\mathbf{X}}^p\log^{p/2}(1/P_{\Phi,\mathbf{X}})),
\end{align*}
where the inequality follows from Theorem \ref{Th:main}.
\end{proof}

We believe that (\ref{rate}) and (\ref{lp}) are the full convergence rate because from (1.3) in the supplementary materials we can see that the convergence rate of the radial basis approximation for deterministic functions in the reproducing kernel Hilbert space is $O(P_{\Phi,\mathbf{X}})$ and these two rates are nearly at the same order of magnitude, expect for a logarithmic factor. This is reasonable because the support of a Gaussian process is typically larger than the corresponding reproducing kernel Hilbert space \citep{van2008reproducing}.
As said earlier in this section, if $\Psi=\Phi$, $P_{\Phi,\mathbf{X}}$ is the supremum of the pointwise predictive standard deviation. Thus Corollary \ref{Coro:1} implies that, if $\Psi$ is known, the predictive error of kriging under the uniform metric is not much larger than its pointwise error.


\subsection{Error bounds in terms of the fill distance}\label{sec:fill}

Our next step is to find error bounds which are easier to interpret and compute than that in Theorem \ref{Th:main}. To this end, we wish to find an upper bound of $P_{\Phi,\mathbf{X}}$, in which the effects of the design $\mathbf{X}$ and the kernel $\Phi$ can be made explicit and separately. This step is generally more complicated, but fortunately some upper bounds are available in the literature, especially for the Gaussian and the Mat\'ern kernels.
These bounds are given in terms of the \textit{fill distance}, which is a quantity depending only on the design $\mathbf{X}$. Given the experimental region $\Omega$, the fill distance of a design $\mathbf{X}$ is defined as
\begin{equation}\label{filldistance}
h_{\mathbf{X}}:=\sup_{\bm x\in\Omega}\min_{\bm x_j\in\mathbf{X}}\|\bm x-\bm x_j\|.
\end{equation}
Clearly, the fill distance quantifies the space-filling property \citep{santner2013design} of a design. A design having the minimum fill distance among all possible designs with the same number of points is known as a minimax distance design \citep{johnson1990minimax}.

The upper bounds of $P_{\Phi,\mathbf{X}}$ in terms of the fill distance for Gaussian and Mat\'ern kernels are given in Lemmas \ref{Th:Gaussian} and \ref{Th:Matern}, respectively.

\begin{lemma}[\citealp{wendland2004scattered}, Theorem 11.22]\label{Th:Gaussian}
	Let $\Omega = [0,1]^d$; $\Phi(x)$ be a Gaussian kernel given by (\ref{gaussian}). Then there exist constants $c,h_0$ depending only on $\Omega$ and the scale parameter $\phi$ in (\ref{gaussian}), such that
$P_{\Phi,\mathbf{X}}\leq h_{\mathbf{X}}^{c/h_{\mathbf{X}}}$ provided that $h_{\mathbf{X}}\leq h_0$.
\end{lemma}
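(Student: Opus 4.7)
The plan is to view the bound as a purely deterministic, approximation-theoretic statement about the kriging interpolant in the native reproducing kernel Hilbert space of the Gaussian kernel $\Phi$. The starting point is the minimality characterization of the power function,
\begin{equation*}
P^2_{\Phi,\mathbf{X}}(\bm x)=\min_{\bm u\in\mathbf{R}^n}\Bigl[\Phi(0)-2\sum_{j=1}^n u_j\Phi(\bm x-\bm x_j)+\sum_{j,k=1}^n u_j u_k\Phi(\bm x_j-\bm x_k)\Bigr],
\end{equation*}
which is equivalent to saying that $P_{\Phi,\mathbf{X}}(\bm x)$ is the operator norm of the pointwise error functional $f\mapsto f(\bm x)-\mathcal{I}_{\Phi,\mathbf{X}}f(\bm x)$ on the native space. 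Thus any admissible choice of coefficients $\bm u$ yields an upper bound on $P^2_{\Phi,\mathbf{X}}(\bm x)$; the task is to construct a good one.

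The central idea, exploiting the fact that the Gaussian kernel is entire with super-exponentially decaying Fourier transform, is a local polynomial-reproducing quasi-interpolant. For a fixed $\bm x\in\Omega$, let $m$ be an integer to be chosen later, and restrict attention to design points lying in a small ball $B(\bm x,\rho)$ with $\rho$ a constant multiple of $m\, h_{\mathbf{X}}$. First I would invoke a standard polynomial reproduction lemma on quasi-uniform scattered data (a consequence of Markov's inequality and a norm equivalence on the finite-dimensional space of polynomials of degree $\leq m$): there exist coefficients $u_j$, supported on $B(\bm x,\rho)$, such that $\sum_j u_j p(\bm x_j)=p(\bm x)$ for every polynomial $p$ of total degree at most $m$, and such that $\sum_j|u_j|\leq C_1$ for a constant depending only on $d$ and $\Omega$.

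Next I would plug these coefficients into the quadratic form above and expand $\Phi$ in its Taylor series about the origin. Because $\Phi$ is a Gaussian, all derivatives are controlled and the remainder after degree $m$ is bounded by $C_2^{\,m}\rho^{m+1}/(m+1)!$, locally uniformly on $B(\bm x,2\rho)$. The polynomial reproduction property annihilates every term of order $\leq m$, so the residual quadratic form is bounded by a universal constant times $C_2^{\,m}\rho^{m+1}/(m+1)!$. This yields an estimate of the form $P^2_{\Phi,\mathbf{X}}(\bm x)\leq (C_3 m h_{\mathbf{X}})^{m}/m!$ after absorbing constants.

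Finally I would choose $m$ to be the integer part of $c_0/h_{\mathbf{X}}$ for a sufficiently small $c_0$, which ensures $C_3 m h_{\mathbf{X}}\leq 1/2$, say, so Stirling's formula converts the factorial decay into the desired super-algebraic rate $h_{\mathbf{X}}^{c/h_{\mathbf{X}}}$, uniformly in $\bm x\in\Omega$; then passing to the supremum over $\bm x$ gives the statement of the lemma, with $h_0$ chosen small enough to make $m\geq 1$ and the ball $B(\bm x,\rho)$ lie inside (an enlargement of) $\Omega$. The main obstacle is the stable polynomial reproduction step: one must show that the coefficients $u_j$ can be chosen with $\ell_1$-norm growing at most polynomially in $m$, uniformly in the placement of the design points within $B(\bm x,\rho)$. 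This is the technical core of Wendland's argument, and requires a careful covering/Markov-inequality argument exploiting the fact that a ball of radius $\rho\sim m h_{\mathbf{X}}$ automatically contains enough design points (on the order of $m^d$) to unisolve polynomials of degree $m$.
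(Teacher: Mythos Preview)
The paper does not give its own proof of this lemma; it is quoted directly from \cite{wendland2004scattered}, Theorem~11.22, and invoked as a black box in Section~\ref{sec:fill}. There is therefore no in-paper argument to compare your proposal against.

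That said, your sketch is essentially Wendland's own proof. The three ingredients you identify --- the variational characterization of $P_{\Phi,\mathbf{X}}^2(\bm x)$ as a minimum over coefficient vectors $\bm u$, a stable local polynomial reproduction of degree $m$ supported in a ball of radius $O(m\,h_{\mathbf{X}})$, and a Taylor-remainder bound for the entire Gaussian kernel followed by the choice $m\asymp 1/h_{\mathbf{X}}$ and Stirling --- are exactly the components of the Chapter~11 argument in \cite{wendland2004scattered} (combining his Theorems~3.14 and~11.22). One small sharpening worth noting: in Wendland's polynomial-reproduction lemma the Lebesgue constant $\sum_j|u_j|$ is in fact bounded by an absolute constant independent of $m$ (he gets $2$), not merely polynomially in $m$; this is what allows the degree $m\sim 1/h_{\mathbf{X}}$ to be pushed without an extra factor competing against the factorial. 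Your weaker polynomial-growth hypothesis would still close the argument, only at the cost of a smaller constant $c$ in the exponent. Apart from that and some bookkeeping in the Taylor remainder (the Gaussian is even, so the expansion is in $\|\bm x\|^2$ and the effective order is $2m$), the proposal is a correct outline of the cited proof.
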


\begin{lemma}[\citealp{wu1993local}, Theorem 5.14]\label{Th:Matern}
	Let $\Omega$ be compact and convex with a positive Lebesgue measure; $\Phi(x)$ be a Mat\'ern kernel given by (\ref{matern}) with the smoothness parameter $\nu$. Then there exist constants $c,h_0$ depending only on $\Omega,\nu$ and the scale parameter $\phi$ in (\ref{matern}), such that
	$P_{\Phi,\mathbf{X}}\leq c h_{\mathbf{X}}^{\nu}$ provided that $h_{\mathbf{X}}\leq h_0$.
\end{lemma}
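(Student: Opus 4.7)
The plan is to follow the standard radial basis function route via the native space (reproducing kernel Hilbert space) of the Mat\'ern kernel. The first step is to identify this native space: the Fourier transform of the Mat\'ern kernel $\Phi$ with smoothness parameter $\nu$ decays like $(1+\|\bm\omega\|^2)^{-(\nu+d/2)}$ up to constants, so the native space $\mathcal{N}_\Phi(\mathbf{R}^d)$ is norm-equivalent to the fractional Sobolev space $H^{\nu+d/2}(\mathbf{R}^d)$. Restricting to $\Omega$ and using a bounded extension operator (which exists because $\Omega$ is Lipschitz by convexity), the same equivalence holds on $\Omega$.

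The second step is to express the power function via the native-space optimality of the kriging interpolant. A standard argument shows that
\begin{equation*}
P_{\Phi,\mathbf{X}}(\bm x) = \sup_{\|g\|_{\mathcal{N}_\Phi}\le 1} \bigl|g(\bm x) - \mathcal{I}_{\Phi,\mathbf{X}} g(\bm x)\bigr|,
\end{equation*}
so I need to bound $\|g - \mathcal{I}_{\Phi,\mathbf{X}} g\|_{L_\infty(\Omega)}$ uniformly over $g$ in the unit ball of $\mathcal{N}_\Phi$. The error $e := g - \mathcal{I}_{\Phi,\mathbf{X}} g$ belongs to $\mathcal{N}_\Phi$ with $\|e\|_{\mathcal{N}_\Phi}\le \|g\|_{\mathcal{N}_\Phi}\le 1$ (by the orthogonal-projection property of $\mathcal{I}_{\Phi,\mathbf{X}}$) and vanishes on $\mathbf{X}$.

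The third and central step is a \emph{sampling inequality} for Sobolev functions: if $f\in H^s(\Omega)$ with $s=\nu+d/2 > d/2$ vanishes on a point set $\mathbf{X}$ with fill distance $h_{\mathbf{X}}\le h_0$, then
\begin{equation*}
\|f\|_{L_\infty(\Omega)} \le C\, h_{\mathbf{X}}^{\,s-d/2}\|f\|_{H^s(\Omega)} = C\, h_{\mathbf{X}}^{\,\nu}\|f\|_{H^s(\Omega)}.
\end{equation*}
To establish this, for each $\bm x\in\Omega$ I would use the interior cone condition coming from compactness and convexity of $\Omega$ to locate a ball $B(\bm x, R h_{\mathbf{X}})\cap \Omega$ containing a sufficiently unisolvent subset of $\mathbf{X}$ on which polynomials of degree $\lfloor s\rfloor$ can be reproduced with uniformly bounded Lebesgue constants. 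Constructing local polynomial reproduction weights (for example via Wendland's construction), one Taylor-expands $f$ around $\bm x$ to degree $\lfloor s\rfloor$, exploits $f|_{\mathbf{X}}=0$ to cancel the polynomial part, and bounds the Taylor remainder in $L_\infty$ by the $H^s$ seminorm via the fractional Sobolev embedding on the small patch, picking up the scaling factor $h_{\mathbf{X}}^{s-d/2}$.

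Applying this inequality with $f=e$ and $\|e\|_{H^s(\Omega)}\le C'\|e\|_{\mathcal{N}_\Phi}\le C'$ (using the norm equivalence from Step 1) yields $|e(\bm x)|\le c\, h_{\mathbf{X}}^{\nu}$ for all $\bm x\in\Omega$, and taking the supremum over $\bm x$ and over $g$ in the unit ball completes the bound on $P_{\Phi,\mathbf{X}}$. The main obstacle is Step 3: the geometric construction of local polynomial reproduction with uniformly controlled constants requires the cone condition and careful tracking of how the scaling of the small patches interacts with the fractional Sobolev embedding, and it is the place where $h_0$ and the dependence on $\Omega$, $\nu$ and $\phi$ really enter.
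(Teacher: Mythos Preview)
The paper does not supply its own proof of this lemma: it is stated as a citation of Theorem~5.14 of Wu and Schaback (1993), and is used as a black box to pass from the power function $P_{\Phi,\mathbf{X}}$ to the fill distance $h_{\mathbf{X}}$. So strictly speaking there is nothing in the paper to compare your argument against.

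That said, your route is sound but differs from the argument in the cited reference. Wu and Schaback work \emph{directly} with the power function: they construct local polynomial reproduction weights $a_j(\bm x)$ supported in a ball of radius proportional to $h_{\mathbf X}$, observe that $P_{\Phi,\mathbf X}^2(\bm x)\le \sum_{j,k} a_j a_k\bigl(\Phi(0)-\Phi(\bm x-\bm x_j)-\Phi(\bm x-\bm x_k)+\Phi(\bm x_j-\bm x_k)\bigr)$ for any weights reproducing polynomials of the appropriate degree, and then bound each bracket by Taylor-expanding $\Phi$ and exploiting the decay of $\tilde\Phi$. No Sobolev identification or sampling inequality is invoked. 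Your approach instead passes through the norm equivalence $\mathcal N_\Phi\simeq H^{\nu+d/2}$ and a Sobolev sampling (``zeros'') inequality in the style of Narcowich--Ward--Wendland; this is the more modern packaging and has the advantage of separating the analytic content (Sobolev embedding on small patches) from the kernel-specific content (Fourier decay of $\Phi$), at the cost of importing the extension operator and the fractional embedding machinery. Both arguments hinge on the same geometric ingredient---local polynomial reproduction under an interior cone condition---which is exactly where $h_0$ and the dependence on $\Omega$ enter, as you correctly identify.
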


Using the upper bounds of $P_{\Phi,\mathbf{X}}$ given in Lemmas \ref{Th:Gaussian} and \ref{Th:Matern}, we can further deduce error bounds of the kriging predictor in terms of the fill distance defined in (\ref{filldistance}). 
We demonstrate these results in Examples \ref{eg1}-\ref{eg3}.

\begin{eg}\label{eg1}
	Here we assume $\Phi$ is a Mat\'ern kernel in (\ref{matern}) with smoothness parameter $\nu$. It is known that
	\begin{eqnarray}\label{maternfourier}
	\tilde{\Phi}(\bm \omega)=2^d \pi^{d/2}\frac{\Gamma(\nu+d/2)}{\Gamma(\nu)}(4\nu \phi^2)^\nu (4\nu\phi^2+\|\bm{\omega}\|^2)^{-(\nu+d/2)},
	\end{eqnarray}
	where $\phi$ is the scale parameter in (\ref{matern}).
	See, for instance, \cite{wendland2004scattered,tuo2015efficient}. Suppose $\Psi$ is a Mat\'ern correlation function with smoothness $\nu_0$. It can be verified that Condition \ref{C1} holds if and only if $0<\nu\leq \nu_0$. Therefore, if $0<\nu\leq \nu_0$, we can invoke Lemma  \ref{Th:Matern} and Theorem \ref{Th:main} to obtain that the kriging predictor converges to the true Gaussian process with a rate at least $O_\mathbb{P}(h_{\mathbf{X}}^\nu\log^{1/2}(1/h_{\mathbf{X}}))$ as $h_{\mathbf{X}}$ tends to zero. It can be seen that the rate of convergence is maximized at $\nu=\nu_0$. In other words, if the true smoothness is known \textit{a priori}, one can obtain the greatest rate of convergence.
\end{eg}

\begin{eg}
	Suppose $\Phi$ is the same as in Example \ref{eg1}, and $\Psi$ is a Gaussian correlation function in (\ref{gaussian}), with spectral density \citep{santner2013design}
	$\tilde{\Psi}(\bm \omega)=(\pi/\phi)^{2/d}\exp\{-\|\bm \omega\|^2/(4\phi)\},$ where $\phi$ is the scale parameter in (\ref{gaussian}). Then Condition \ref{C1} holds for any choice of $\nu$. Then we can invoke Lemma  \ref{Th:Matern} and Theorem \ref{Th:main} to obtain the same rate of convergence as in Example \ref{eg1}.
\end{eg}


\begin{eg}\label{eg3}
	Suppose $\Phi=\Psi$, and $\Phi$ is a Gaussian kernel in (\ref{gaussian}). Then we can invoke Lemmas \ref{Th:Gaussian} and Theorem \ref{Th:main} to obtain the rate of convergence $O_\mathbb{P}(h_{\mathbf{X}}^{c/h_{\mathbf{X}}-1/2}\log^{1/2}(1/h_{\mathbf{X}}))$ for some constant $c>0$. Note that this rate is faster than the rates obtained in Examples \ref{eg1}-\ref{eg3}, because it decays faster than any polynomial of $h_{\mathbf{X}}$. Such a rate is known as a spectral convergence order \citep{xiu2010numerical,wendland2004scattered}.
\end{eg}

The upper bounds in Lemmas \ref{Th:Gaussian} and \ref{Th:Matern} explain more explicitly how the choice of designs can affect the prediction performance. Note that in Examples 1-3, the upper bounds are  increasing in $h_\mathbf{X}$. This suggests that we should consider the designs with a minimum $h_{\mathbf{X}}$ value, which are known as the maximin distance designs \citep{johnson1990minimax}. Therefore our theory shows that the maximin distance designs enjoy nice theoretical guarantees for all Gaussian and Mat\'ern kernels. In contrast with the designs minimizing $P_{\Phi,\mathbf{X}}$ as discussed after Theorem \ref{Th:main}, it would be practically beneficial to use the maximin distance designs because they can be constructed without knowing which specific kriging model is to be used.

\subsection{Comparison with some existing results}\label{sec:comp}

We make some remarks on the relationship between our results and some existing results. In Table \ref{Tab:comparison}, we list some  related results in the literature concerning the prediction of some underlying function, which is either a realization of a Gaussian process or a determinisitc function in a reproducing kernel Hilbert spaces.
It can be seen from Table \ref{Tab:comparison} that only \cite{seleznjev1999certain} and \cite{wu1993local} address the uniform convergence problem.

\begin{table}[!h]
	\centering
	\tiny
	\begin{tabular}{p{2cm} p{2.4cm} p{1.8cm} p{2.5cm} p{2.5cm} p{2.3cm}}
		\hline\hline
		Article/Book & {Model assumption} & Predictor &  {Design} & {Type of convergence} & {Rate of convergence (Mat\'ern kernels)} \\
		\hline\hline
		{present work} & {Gaussian process with misspecification} & kriging  & {scattered points}  & {$L_p$ conv., uniform in $x$}  & {$h_{\mathbf{X}}^\nu(\log (1/h_{\mathbf{X}}))^{1/2}$}\\
		\hline
		\cite{yakowitz1985comparison}  & {stochastic process with misspecification} & kriging & {scattered points} & {Mean square conv., pointwise in $x$}   & NA\\
		\hline
		{\cite{stein1990uniform}}  & {Stochastic process with misspecification} & kriging & {regular grid points} & {Mean square conv., pointwise in $x$}   & {$n^{-\nu/d}$}\\
		\hline
		{\cite{seleznjev1999certain}}  & {Gaussian process} & best linear approximation & {optimally chosen points in an interval} & {$L_p$ conv., uniform in $x$} & {$n^{-\nu}(\log n)^{1/2}$} \\
		\hline
		{\cite{ritter2007average}} & {Gaussian process} & kriging & {optimally chosen points} & {Mean square conv., $L_2$ in $x$} & {$n^{-\nu/d}$}
		\\
		\hline
		{\cite{wu1993local}} & {Deterministic function} & kriging & {scattered points} & {uniform in $x$} & {$h^{\nu}_{\mathbf{X}}$} \\
		\hline\hline
	\end{tabular}
	\caption{Comparison between our work and some existing results}
	\label{Tab:comparison}
\end{table}

\cite{seleznjev1999certain} study the rate of convergence of the best linear approximation under an optimally chosen points in an interval. In other words, the predictor is constructed using the best linear combination of the observed data. Thus this predictor is in general different from the kriging predictor. Also, note that their work is limited to the one dimensional case where the points are chosen in a specific way. Therefore, this theory does not directly address the question raised in this paper. However, their result, together with our findings in Example \ref{eg1}, does imply an interesting property of kriging. Recall that in Example \ref{eg1}, the rate of convergence for a (known) Mat\'ern correlation is at least $O_\mathbb{P}(h_{\mathbf{X}}^\nu\log^{1/2}(1/h_{\mathbf{X}}))$. If a space-filling design is used in an interval, then $h_{\mathbf{X}}\sim 1/n$ and the convergence rate is $O_\mathbb{P}(n^{-\nu}(\log n)^{1/2})$, which coincides with the best possible rate of convergence given by a linear predictor. Because the kriging predictor is a linear predictor, we can conclude that our uniform upper bound for kriging is sharp in the sense that it captures the acctual rate of convergence.

Among the papers listed in Table \ref{Tab:comparison}, \cite{wu1993local} is the only one comparable to ours, in the sense that they consider a uniform prediction error under a scatter set of design points.
They obtain error estimates of the kriging-type interpolants for a deterministic function, known as the radial basis function approximation. 
Although the mathematical formulations of the interpolants given by kriging and radial basis functions are similar, the two methods are different in their mathematical settings and assumptions. In radial basis function approximation, the underlying function is assumed \textit{fixed}, while kriging utilizes a probabilistic model, driven by a Gaussian random field. 

Kriging with misspecified correlation functions is discussed in \cite{yakowitz1985comparison,stein1988asymptotically,stein1990bounds,stein1990uniform}. It has been proven in these papers that some correlation functions, especially the Mat\'ern correlation family, are robust against model misspecification. However, they do not consider convergence under a uniform metric. More discussions on this point are given in Section \ref{sec:discussion}.

\section{Simulation studies}\label{sec:simulation}
In Example \ref{eg1}, we have shown that if $\Psi$ and $\Phi$ are Mat\'ern kernels with smoothness parameters $\nu_0$ and $\nu$, respectively, and $\nu\leq \nu_0$, then the kriging predictor converges with a rate at least $O_\mathbb{P}(h_{\mathbf{X}}^\nu\log^{1/2}(1/h_{\mathbf{X}}))$. In this section we report simulation studies that verify that this rate is sharp, i.e., the true convergence rate coincides with that given by the theoretical upper bound.





We denote the expectation of the left-hand side of (\ref{eq:thmBound}) by $\mathcal{E}$. If the error bound (\ref{eq:thmBound}) is sharp, we have the approximation $$\mathcal{E} \approx c h_{\mathbf{X}}^\nu\log^{1/2}(1/h_{\mathbf{X}})$$ for some constant $c$ independent of $h_{\mathbf{X}}$. Taking logarithm on both sides of the above formula yields
\begin{eqnarray}\label{eq:log}
\log \mathcal{E}\approx \nu\log h_{\mathbf{X}} + \frac{1}{2}\log(-\nu\log h_{\mathbf{X}}) + \log c.
\end{eqnarray}
Since $\log(-\nu\log h_{\mathbf{X}})$ is much smaller than $\log h_{\mathbf{X}}$, the effect of $\log(-\nu\log h_{\mathbf{X}})$ is negligible in (\ref{eq:log}). Consequently, we get our second approximation
\begin{eqnarray}\label{eq:linearreg}
\log \mathcal{E}\approx \nu\log h_{\mathbf{X}} + \log c.
\end{eqnarray}
As shown in (\ref{eq:linearreg}), $\log \mathcal{E}$ is approximately a linear function in $\log h_{\mathbf{X}}$ with slope $\nu$. Therefore, to assess whether (\ref{eq:thmBound}) is sharp, we should verify if the regression coefficient (slope) of $\log \mathcal{E}$ with respect to $\log h_{\mathbf{X}}$ is close to $\nu$.

In our simulation studies, the experimental region is chosen to be $\Omega = [0,1]^2$. To estimate the regression coefficient $\nu$ in (\ref{eq:linearreg}), we choose 50 different maximin Latin hypercube designs \citep{santner2013design} with sample sizes $10k$, for $k=1,2...,50$.
Note that each design corresponds to a specific value of the fill distance $h_{\mathbf{X}}$. For each $k$, we simulate the Gaussian processes 100 times to reduce the simulation error. For each simulated Gaussian process, we compute $\sup_{\bm x \in \Omega_1}|Z(\bm x)-\mathcal{I}_{\Phi,\mathbf{X}}Z(\bm x)|$ to approximate the sup-error $\sup_{\bm x \in \Omega}|Z(\bm x)-\mathcal{I}_{\Phi,\mathbf{X}}Z(\bm x)|$, where $\Omega_1$ is the set of grid points with grid length $0.01$. This should give a good approximation since the grid is dense enough. Next, we calculate the average of $\sup_{\bm x \in \Omega_1}|Z(\bm x)-\mathcal{I}_{\Phi,\mathbf{X}}Z(\bm x)|$ over the 100 simulations to approximate $\mathcal{E}$. Then the regression coefficient is estimated using the least squares method.

We conduct four simulation studies with different choices of the true and imposed smoothness of the Mat\'ern kernels, denoted by $\nu_0$ and $\nu$, respectively. We summarize the simulation results in Table \ref{Tab:simuResults}.





\begin{table}[h]
\centering
\begin{tabular}{c|c|c|c|c}
\hline
$\nu_0$ & $\nu$ &  Regression coefficient & Theoretical assertion & Relative difference\\
\hline
3 & 2.5 & 2.697 & 2.5 & 0.0788\\
5 & 3.5 & 3.544 & 3.5 & 0.0126 \\
3.5 & 3.5 & 3.582 & 3.5 & 0.0234\\
5 & 5 & 4.846 & 5 & 0.0308\\
\hline
\end{tabular}
\caption{Numerical studies on the convergence rates of kriging prediction. (The first two columns show the true and imposed smoothness parameters of the Mat\'ern kernels. The third column shows the convergence rate obtained from the simulation. The fourth column shows the convergence rate given by Theorem \ref{Th:main}. The last column shows the relative difference between the third and the fourth columns, given by |regression coefficient-theoretical assertion|/(theoretical assertion).)}
\label{Tab:simuResults}
\end{table}

It is seen in Table \ref{Tab:simuResults} that the regression coefficients are close to the values given by our theoretical analysis, with relative error no more than 0.08. By comparing the third and the fourth rows of Table \ref{Tab:simuResults}, we find that the regression coefficient does not have a significant change when $\nu$ remains the same, even if $\nu_0$ changes. On the other hand, the third and the fifth rows show that, the regression coefficient changes significantly as $\nu$ changes, even if $\nu_0$ keeps unchanged. This shows convincingly that the convergence rate is independent of the true smoothness of the Gaussian process, and the rate given by Theorem \ref{Th:main} is sharp. Note that our simulation studies justify the use of the leading term $\log  h_{\mathbf{X}}$ in (\ref{eq:log}) to assess the convergence rate but they do not cover the second term $\log (-\nu\log h_{\mathbf{X}})$, which is of lower order. Figure 1 in the supplementary material shows that the regression line for the logarithm of the fill distance and the logarithm of the average prediction error fits the data very well.

From the simulation studies, we can see that if the smoothness of the imposed kernel is lower, the kriging predictor converges slower. Therefore, to maximize the prediction efficiency, it is beneficial to set the smoothness parameter of the imposed kernel the same as the true correlation function.

	\section{Extensions to universal kriging}\label{sec:new}
	
	In this section, we extend the main result in Theorem \ref{Th:main} from simple kriging to universal kriging.
	As an extension of simple kriging, universal kriging is widely used in practice. Instead of using a zero mean Gaussian process, universal kriging assumes that the Gaussian process has a non-zero mean function, modeled as a linear combination of a set of basis functions with unknown regression coefficients. Specifically, we consider the following model:
	\begin{eqnarray}
	Y(\bm x)=\bm f^T(\bm x)\bm \beta+Z(\bm x),
	\end{eqnarray}
	where $f(\bm x):=(f_1(\bm x),\ldots,f_p(\bm x))^T$ is a vector of $p$ linearly independent known functions over $\Omega$, $\bm \beta$ is an unknown vector of regression coefficients, and $Z(\bm x)$ is a zero mean stationary Gaussian process with correlation function $\Psi$. The goal of universal kriging is to reconstruct $Y(\bm x)$ based on scattered observations $Y(\bm x_1),\ldots,Y(\bm x_n)$.
	
	A common practice is to use the maximum likelihood estimation (MLE) method to estimate $\bm \beta$ and the best linear unbiased predictor (BLUP) to predict $Y(\bm x)$ at an untried $\bm x$. The following facts can be found in the book by \cite{santner2013design}. As before, let $\mathbf{K}:=(\Psi(\bm x_j-\bm x_k))_{jk}$. Define $\mathbf{F}:=(\bm f(\bm x_1),\ldots,\bm f(\bm x_n))^T$. We temporarily suppose that $\Psi$ is known. Then the MLE of $\bm \beta$ is given by
	\begin{eqnarray}\label{betaMLE}
	\hat{\bm \beta}=(\mathbf{F}^T \mathbf{K}^{-1} \mathbf{F})^{-1} \mathbf{F}^T \mathbf{K}^{-1}\mathbf{Y},
	\end{eqnarray}
	with $\mathbf{Y}:=(Y(\bm x_1),\ldots,Y(\bm x_n))^T$.
	To use (\ref{betaMLE}), we should require $n\geq p$ so that $\mathbf{F}^T \mathbf{K}^{-1} \mathbf{F}$ is invertible. The BLUP of $Y(\bm x)$ is
	\begin{align}\label{BLUP}
		\hat{Y}_{\mathbf{BLUP}}(\bm x)= & [\bm f^T(\bm x)(\mathbf{F}^T \mathbf{K}^{-1} \mathbf{F})^{-1} \mathbf{F}^T \mathbf{K}^{-1}\nonumber\\
	& +\bm r^T(\bm x)\mathbf{K}^{-1}(\mathbf{I}_n-\mathbf{F}(\mathbf{F}^T\mathbf{K}^{-1}\mathbf{F})^{-1}\mathbf{F}^T\mathbf{K}^{-1})]\mathbf{Y},
	\end{align}
	where $\bm r(\bm x):=(\Psi(\bm x-\bm x_1),\ldots,\Psi(\bm x-\bm x_n))^T$.
	
	As before, we are interested in the situation with a misspecified correlation function, also denoted by $\Phi$. Using (\ref{BLUP}), we can calculate the ``BLUP'' of $Y(\bm x)$ under $\Phi$, denoted by $\hat{Y}_{\mathbf{BLUP},\Phi}(\bm x)$, with redefined $\bm r$ and $\mathbf{K}$ given by $\bm r:=(\Phi(\bm x-\bm x_1),\ldots,\Phi(\bm x-\bm x_n))^T$ and $\mathbf{K}:=(\Phi(\bm x_j-\bm x_k))_{jk}$.
	
	The goal of our theoretical study is to bound $\sup_{\bm x\in \Omega}|Y(\bm x)-\hat{Y}_{\mathbf{BLUP},\Phi}(\bm x)|$ in a way similar to Theorem \ref{Th:main}. The results are given in Theorem \ref{Th:BLUP}. We denote the minimum eigenvalue of a matrix $\mathbf{H}$ by $\lambda_{min}(\mathbf{H})$. Let $\mathcal{A}$ be the set of $p\times p$ submatrices of $\mathbf{F}$.
	
	\begin{theorem}\label{Th:BLUP}
		Suppose the conditions of Theorem \ref{Th:main} are fulfilled. In addition, $f_j\in\mathcal{N}_\Phi(\Omega)$ for $j=1,\ldots,p$. Then
		\begin{eqnarray}\label{theorem2}
		\mathbb{E}\left[\sup_{\bm x\in \Omega}|Y(\bm x)-\hat{Y}_{\mathbf{BLUP},\Phi}(\bm x)|\right]=O(P_{\Phi,\mathbf{X}}[pA+\log^{1/2}(1/P_{\Phi,\mathbf{X}})]),
		\end{eqnarray}
		where the asymptotic constant is independent of $\mathbf{X}$, $p$ and $f_j$'s; and
		$$A=\left(\sum_{j=1}^p\|f_j\|^2_{\mathcal{N}_\Phi(\Omega)}/\max_{\mathbf{F}_p\in\mathcal{A}}\lambda_{min}(\mathbf{F}_p^T \mathbf{F}_p)\right)^{1/2}.$$
	\end{theorem}
	
	The condition $f_i\in\mathcal{N}_\Phi(\Omega)$ in Theorem \ref{Th:BLUP} is mild if $\Phi$ is a Mat\'ern kernel. It is known that in such case, $\mathcal{N}_\Phi(\Omega)$ coincides with a Sobolev space, which contains all smooth functions, such as polynomials. See Corollary 10.13 of \cite{wendland2004scattered}.

Compared to the uniform error bound for simple kriging, (\ref{theorem2}) has an additional term $O(P_{\Phi,\mathbf{X}}pA)$, which is caused by the unknown regression coefficient $\beta$.
In many situations, $pA$ is bounded above by a constant, e.g., when $p$ and $f_j$'s are fixed and $\bm x_j$'s are independent random samples.
In this case
$O(P_{\Phi,\mathbf{X}}pA)=O(P_{\Phi,\mathbf{X}})$ and can be absorbed by the simple kriging uniform error bound.

As a by-product of our analysis, we show that the MLE $\hat{\bm \beta}$ is \textit{inconsistent} when $\Psi$ is known. The result in Theorem \ref{Th:inconsistency} shows that the covariance matrix of $\hat{\bm \beta}-\bm \beta$ is no less than the inverse of the inner product matrix of $f_j$'s in the reproducing kernel Hilbert space, in the sense that the former subtracting the latter is positive semidefinite.

\begin{theorem}\label{Th:inconsistency}
If $\Phi=\Psi$ and $f_j\in\mathcal{N}_\Phi(\Omega)$ for $j=1,\ldots,p$, we have
$\text{Var}(\hat{\bm \beta}-\bm \beta)\geq \mathbf{V}^{-1}$, where $\mathbf{V}:=(\langle f_j,f_k\rangle_{\mathcal{N}_\Phi(\Omega)})_{j k}$ is positive definite.
\end{theorem}

The proof of Theorem \ref{Th:inconsistency} shows that the estimation of $\bm \beta$ is perturbed by the Gaussian process, and thus becomes inconsistent. In addition, the well-known theory by \cite{ying1991asymptotic} and \cite{zhang2004inconsistent} suggests that the model parameters in the covariance functions may not have consistent estimators. Therefore, it would be more meaningful to use Gaussian process models for prediction, rather than for parameter identification.

Next, we study the uniform error bound when a \textit{random} kernel function is used. Such a result can be useful when an estimated correlation function is used. The main idea here is to study a more sophisticated type of uniform error, which also takes supremum over a family of correlation functions.

Let $\Phi_{\bm \theta}$ be a family of correlation functions indexed by $\bm \theta=(\theta_1,\ldots,\theta_q)^T\in{\bm \Theta}$. Suppose $\Theta$ is a compact subregion of $\mathbf{R}^q$. For notational simplicity, denote $P_{\mathbf{X}}=\max_{\bm \theta\in\Theta}P_{\Phi_{\bm \theta},\mathbf{X}}$. Theorem \ref{Th:randomkernel} provides a uniform error bound in terms of both untried $\bm x$ and parameter $\bm \theta$.

\begin{condition}\label{C1a}
	The kernels $\Phi_{\bm \theta}$ are continuous and integrable on $\mathbf{R}^d$, satisfying
  \begin{eqnarray}\label{smoothnesstheta}
  \|\tilde{\Psi}/\tilde{\Phi}_{\bm \theta}\|_{L_\infty(\mathbf{R}^d)}=:A_1^2<+\infty,
  \end{eqnarray}
  and
	\begin{eqnarray}\label{differentiabilitytheta}
	\int_{\mathbf{R}^d} \|\bm \omega\|^\alpha\tilde{\Phi}_{\bm \theta}(\bm \omega) d \bm  \omega=:A_0<+\infty,
	\end{eqnarray}
	for constants $\alpha\in(0,1], A_0,A_1$ independent of $\bm \theta$.
\end{condition}

\begin{theorem}\label{Th:randomkernel}
Suppose the conditions of Theorem \ref{Th:BLUP} and Condition \ref{C1a} are fulfilled.
Suppose $\Phi_{\bm \theta}(\bm x)$ is differentiable in $\bm \theta$ for each $\bm x$. In addition, suppose the differentiation in $\bm \theta$ and the Fourier transform in $\bm x$ are interchangeable, i.e.,
\begin{eqnarray}\label{interchange}
\widetilde{ \frac{\partial\Phi_{\bm \theta}}{\partial \theta_j}}=\frac{\partial \widetilde{\Phi_{\bm\theta}}}{\partial \theta_j}, j=1,\ldots,q.
\end{eqnarray}
Moreover, suppose
\begin{eqnarray}\label{logbound}
\sup_{\bm x\in\mathbf{R}^d,\bm \theta\in\Theta} \left|\frac{\partial}{\partial \theta_j}\log \widetilde{\Phi_{\bm \theta}}\right|\leq A_2<+\infty, j=1,\ldots,q.
\end{eqnarray}
Then,
\begin{eqnarray}\label{randomkernel}
\mathbb{E}\left[\sup_{\bm x\in \Omega,\bm \theta\in\Theta}|Y(\bm x)-\hat{Y}_{\mathbf{BLUP},\Phi_{\bm \theta}}(\bm x)|\right]=O(P_{\mathbf{X}}[pA+\log^{1/2}(1/P_{\mathbf{X}})]),
\end{eqnarray}
where $A$ is the same as in Theorem \ref{Th:BLUP} and the asymptotic constant is independent of $\mathbf{X}$, $p$ and $f_j$'s.
\end{theorem}

The uniform error bound in (\ref{randomkernel}) can govern the error bound when a random kernel is used. Specifically, suppose a random kernel, denoted by $\Phi_{\hat{\bm \theta}}$, is used, and the support of the random variable (estimator) $\hat{\bm \theta}$ is $\Theta$. Then we have $\mathbb{E}\sup_{\bm x\in \Omega}|Y(\bm x)-\hat{Y}_{\mathbf{BLUP},\Phi_{\hat{\bm \theta}}}(\bm x)|=O(P_{\mathbf{X}}[pA+\log^{1/2}(1/P_{\mathbf{X}})])$.

We now verify the conditions of Theorem \ref{Th:randomkernel} for Mat\'ern and Gaussian kernels. Suppose $\Psi$ is a Mat\'ern kernel with smoothness $\nu_0$. Let $\bm \theta=(\phi,\nu)$ and $\Theta$ is a compact subregion of $(0,+\infty)\times (0,\nu_0)$. Clearly, Condition \ref{C1a} is fulfilled. Recall that $\tilde{\Phi}_{\bm \theta}$ is given in (\ref{maternfourier}). Dominated convergence theorem ensures (\ref{interchange}) and we can verify (\ref{logbound}) via direct calculations. Suppose $\Psi$ is a Gaussian kernel in (\ref{gaussian}) with $\phi=\phi_0$. Suppose $\bm \theta=\phi$ and $\Theta$ is a compact subregion of $[\phi_0,+\infty)$. Similar direct calculations can show that the conditions in Theorem \ref{Th:randomkernel} are also fulfilled in this case.

The proofs of Theorems \ref{Th:BLUP} and \ref{Th:randomkernel} utilize the techniques we developed for proving Theorem \ref{Th:main}. These theorems show that the general rate of convergence $O_P(P_{\mathbf{X},\Phi}\log^{1/2}(1/P_{\mathbf{X},\Phi}))$ is still valid even if estimated mean and covariance functions are used.






\section{Conclusions and further discussion}\label{sec:discussion}

We first summarize the statistical implications of this work. We prove that the kriging predictive error converges to zero under a uniform metric, which justifies the use of kriging as a function reconstruction tool. Our analysis covers both simple and universal kriging. Kriging with a misspecified correlation function is also studied. Theorem \ref{Th:main} shows that there is a tradeoff between the predictive efficiency and the robustness. Roughly speaking, a less smooth correlation function is more robust against model misspecification. However, the price for robustness is to incur a small loss in prediction efficiency. With the help of the classic results in radial basis function approximation (in Lemmas \ref{Th:Gaussian} and \ref{Th:Matern}), we find that the predictive error of kriging is associated with the fill distance, which is a space-filling measure of the design. This justifies the use of space-filling designs for (stationary) kriging models.




We have proved in Theorem \ref{Th:main} that the kriging predictor is consistent if the imposed correlation function is \textit{undersmoothed}, i.e., the imposed correlation function is no smoother than the true correlation function. One would ask whether a similar result can be proven for the case of oversmoothed correlation functions. \cite{yakowitz1985comparison} proved that kriging with an oversmoothed Mat\'ern correlation function also achieves (pointwisely) predictive consistency. In light of this result, we may consider extensions of Theorem \ref{Th:main} to the oversmoothed case in a future work.

In a series of papers, \cite{stein1988asymptotically,stein1990bounds,stein1990uniform,stein1993simple} investigated the asymptotic efficiency of the kriging predictor. The theory in our work does not give assertions about prediction efficiency, although we provide explicit error bounds for kriging predictors with scattered design points in general dimensions. Another possible extension of this work is to consider the impact of a misspecified mean function. 
\cite{jiang2011best} address this problem in the context of small area estimation. 


\begin{center}
\Large \textbf{Appendix}
\end{center}

\begin{appendix}

\section{Proof of Theorem \ref{Th:main}}\label{App:proof}

Because $\mathcal{I}_{\Phi,\mathbf{X}}$ is a linear map between two functions, $\mathcal{I}_{\Phi,\mathbf{X}}Z(\bm x)$ is also a Gaussian process. Therefore, the problem in (\ref{aim}) is to bound the maximum value of a Gaussian process. The main idea of the proof is to invoke a maximum inequality for Gaussian processes, which states that the supremum of a Gaussian process is no more than a multiple of the integral of the covering number with respect to the natural distance $\mathfrak{d}$. The details are given in the supplementary materials. Also see \cite{adler2009random,van1996weak} for related discussions.

Without loss of generality, assume $\sigma = 1$, because otherwise we can consider the upper bound of $\sup_{\bm x\in\Omega}|Z(\bm x)-\mathcal{I}_{\Phi,\mathbf{X}}Z(\bm x)|/\sigma$ instead. Let $g(\bm x) = Z(\bm x)-\mathcal{I}_{\Phi,\mathbf{X}}Z(\bm x)$.
For any $\bm x,\bm x'\in \Omega$,
\begin{align*}
\mathfrak{d}(\bm x, \bm x')^2 = &\mathbb{E}(g(\bm x)-g(\bm x'))^2\\
                      = & \mathbb{E}(Z(\bm x)-\mathcal{I}_{\Phi,\mathbf{X}}Z(\bm x)-(Z(\bm x')-\mathcal{I}_{\Phi,\mathbf{X}}Z(\bm x')))^2 \\
                      = &\Psi(\bm x-\bm x) - 2\bm r^T(\bm x)\mathbf{K}^{-1}\bm r_1(\bm x) + \bm r^T(\bm x)\mathbf{K}^{-1}\mathbf{K}_1 \mathbf{K}^{-1}\bm r(\bm x)\\
          & + \Psi(\bm x'-\bm x') - 2\bm r^T(\bm x')\mathbf{K}^{-1}\bm r_1(\bm x') + \bm r^T(\bm x')\mathbf{K}^{-1}\mathbf{K}_1 \mathbf{K}^{-1}\bm r(\bm x')\\
          & -2[\Psi(\bm x-\bm x')- \bm r^T(\bm x')\mathbf{K}^{-1}\bm r_1(\bm x) - \bm r_1^T(\bm x')\mathbf{K}^{-1}\bm r(\bm x) + \bm r^T(\bm x)\mathbf{K}^{-1}\mathbf{K}_1 \mathbf{K}^{-1}\bm r(\bm x') ],
\end{align*}
where $\bm r_1(\cdot) = (\Psi(\cdot-\bm x_1),...,\Psi(\cdot-\bm x_n))^T$, $\bm r(\cdot) = (\Phi(\cdot-\bm x_1),...,\Phi(\cdot-\bm x_n))^T$, $\mathbf{K}_1 = (\Psi(\bm x_j-\bm x_k))_{jk}$, and $\mathbf{K} = (\Phi(\bm x_j-\bm x_k))_{jk}$.

The rest of our proof consists of the following steps. In step 1, we bound the covering number $N(\epsilon,\Omega,\mathfrak{d})$.
Next we bound the diameter $D$. In step 3, we invoke Lemma 1 in the supplementary materials to obtain a bound for the entropy integral. In the last step, we use (1.8) in the supplementary materials to obtain the desired results.

\noindent\textbf{Step 1: Bounding the covering number}

Let $h(\cdot) = \Psi(\bm x-\cdot) - \Psi(\bm x'-\cdot)$ and $h_1(\cdot) = \bm r^T(\bm x)\mathbf{K}^{-1}\bm r_1(\cdot) -  \bm r^T(\bm x')\mathbf{K}^{-1}\bm r_1(\cdot)$. It can verified that
\begin{align*}
\begin{split}
\mathfrak{d}(\bm x, \bm x')^2  = & - [h( \bm x') - \mathcal{I}_{\Phi,\mathbf{X}}h(\bm x')]+[h(\bm x) - \mathcal{I}_{\Phi,\mathbf{X}}h( \bm x)] \\
& + [h_1( \bm x') - \mathcal{I}_{\Phi,\mathbf{X}}h_1( \bm x')]-[h_1( \bm x) - \mathcal{I}_{\Phi,\mathbf{X}}h_1( \bm x)].
\end{split}
\end{align*}
By Condition \ref{C1},
$h\in \mathcal{N}_\Phi(\mathbf{R}^d)$, since $\Psi(\bm x-\cdot)\in \mathcal{N}_\Phi(\mathbf{R}^d)$ for any $\bm x\in \Omega$. Thus, by (1.3) in the supplementary materials,
\begin{align}\label{eq:dist2}
\mathfrak{d}(\bm x, \bm x')^2 \leq 2 P_{\Phi,\mathbf{X}}(\|h\|_{\mathcal{N}_\Phi(\mathbf{R}^d)}+\|h_1\|_{\mathcal{N}_\Phi(\mathbf{R}^d)}).
\end{align}


By Theorem 1 in the supplementary materials,
\begin{align}\label{eq:Fourierh}
\|h\|^2_{\mathcal{N}_\Phi(\mathbf{R}^d)} = (2\pi)^{-d}\int_{\mathbf{R}^d} \frac{|\tilde h(\bm\omega)|^2}{\tilde \Phi(\bm\omega)}d\bm\omega.
\end{align}
Using Condition \ref{C1} and (\ref{eq:Fourierh}), we obtain
\begin{align}\label{eq:FourierhC1}
\|h\|^2_{\mathcal{N}_\Phi(\mathbf{R}^d)} = (2\pi)^{-d}\int_{\mathbf{R}^d} \frac{|\tilde h(\bm\omega)|^2}{\tilde \Phi(\bm\omega)}d\bm\omega\leq A_1^2(2\pi)^{-d}\int_{\mathbf{R}^d} \frac{|\tilde h(\bm\omega)|^2}{\tilde \Psi(\bm\omega)}d\bm\omega = A_1^2 \|h\|^2_{\mathcal{N}_\Psi(\mathbf{R}^d)}.
\end{align}

We need the following inequality to bound $\|h\|^2_{\mathcal{N}_\Psi(\mathbf{R}^d)}$. For any $0<\beta\leq 1$ and $x\in\mathbf{R}$, we have
\begin{eqnarray}\label{cosineq}
|1-\cos x|\leq 2|x|^\beta.
\end{eqnarray}
This inequality is trivial when $|x|\geq 1$ because $|1-\cos x|\leq 2$; and for the case that $|x|<1$, (\ref{cosineq}) can be proven using the mean value theorem and the fact that $|x|\leq |x|^\beta$.
Note that the definition of $h$ implies that $\|h\|^2_{\mathcal{N}_\Psi(\mathbf{R}^d)} = \Psi(\bm x-\bm x) - 2\Psi(\bm x'-\bm x) + \Psi(\bm x'-\bm x')$.
Thus, by the Fourier inversion theorem and (\ref{cosineq}), we have
\begin{align}
\|h\|^2_{\mathcal{N}_\Psi(\mathbf{R}^d)} & = \Psi(\bm x-\bm x) - 2\Psi(\bm x'-\bm x) + \Psi(\bm x'-\bm x')\nonumber\\
& = 2(2\pi)^{-d}\int_{\mathbf{R}^d}(1-e^{i(\bm x-\bm x')^T\bm \omega})\tilde \Psi(\bm\omega)d\bm\omega\nonumber\\
& \leq \bigg( 4(2\pi)^{-d}\int_{\mathbf{R}^d}\|\omega\|^{\beta}\tilde \Psi(\bm\omega)d\bm\omega\bigg)\|\bm x-\bm x'\|^\beta\label{eq:hnorm1}\\
&=:C_1\|\bm x-\bm x'\|^{\beta},\label{eq:entropyC1}
\end{align}
for any $0<\beta\leq \alpha$. In particular, we now choose $\beta=\alpha/2$.
Now we consider $h_1(\cdot)$. It follows from a similar argument that $\|h_1\|^2_{\mathcal{N}_\Phi(\mathbf{R}^d)} \leq A_1^2 \|h_1\|^2_{\mathcal{N}_\Psi(\mathbf{R}^d)}$. The definition of $h_1$ implies $\|h_1\|^2_{\mathcal{N}_\Psi(\mathbf{R}^d)} = (\bm r(\bm x')-\bm r(\bm x))^T\mathbf{K}^{-1}\mathbf{K}_1 \mathbf{K}^{-1}(\bm r(\bm x')-\bm r(\bm x))$.

For any $\bm u= (u_1,...,u_n)^T$, the Fourier inversion theorem and Condition \ref{C1} yield
\begin{align}\label{eq:identity}
& \sum_{j,k=1}^n u_j\bar u_k \Psi(\bm x_j-\bm x_k)\nonumber\\
= & \frac{1}{(2\pi)^d}\int_{\mathbb{R}^d}\sum_{j,k=1}^n u_j\bar u_k e^{i(\bm  x_j-\bm x_k)^T\bm \omega }\tilde\Psi(\bm\omega)d\bm\omega\nonumber\\
= & \frac{1}{(2\pi)^d}\int_{\mathbb{R}^d}\bigg|\sum_{j=1}^n u_j e^{i\bm x_j^T\bm \omega}\bigg|^2\tilde\Psi(\bm\omega)d\bm\omega\\
\leq & \frac{A_1^2}{(2\pi)^d}\int_{\mathbb{R}^d}\bigg|\sum_{j=1}^n u_j e^{i\bm x_j^T\bm \omega}\bigg|^2\tilde\Phi(\bm\omega)d\bm\omega\nonumber\\
= & A_1^2\sum_{j,k=1}^n u_j\bar u_k \Phi(\bm x_j-\bm x_k).\nonumber
\end{align}
Then we choose $\bm u = \mathbf{K}^{-1}(\bm r(\bm x')-\bm r(\bm x))$ to get
\begin{align}\label{eq:h1norm2}
\|h_1\|^2_{\mathcal{N}_\Psi(\mathbf{R}^d)} \leq A_1^2(\bm r(\bm x')-\bm r(\bm x))^T \mathbf{K}^{-1}(\bm r(\bm x')-\bm r(\bm x)).
\end{align}

Let $h_2(\cdot) = \Phi(\cdot - \bm x')-\Phi(\cdot - \bm x)$. Then $\mathcal{I}_{\Phi,\mathbf{X}}h_2(\cdot) = \bm r^T(\cdot) \mathbf{K}^{-1}(\bm r(\bm x')-\bm r(\bm x))$. By (1.3) in the supplementary materials and the fact that $\|h_2\|^2_{\mathcal{N}_\Phi(\mathbf{R}^d)} = \Phi(\bm x-\bm x) - 2\Phi(\bm x'-\bm x) + \Phi(\bm x'-\bm x')$, we have
\begin{align}\label{eq:h1norm3}
& (\bm r(\bm x')-\bm r(\bm x))^T \mathbf{K}^{-1}(\bm r(\bm x')-\bm r(\bm x))\nonumber\\
\leq &  |h_2(\bm x') - \mathcal{I}_{\Phi,\mathbf{X}}h_2(\bm x')| + |h_2(\bm x) - \mathcal{I}_{\Phi,\mathbf{X}}h_2(\bm x)| + |h_2(\bm x')| + |h_2(\bm x)|\nonumber\\
\leq & 2P_{\Phi,\mathbf{X}}\sqrt{\Phi(\bm x-\bm x) - 2\Phi(\bm x'-\bm x) + \Phi(\bm x'-\bm x')} + 2(\Phi(\bm x-\bm x) - 2\Phi(\bm x'-\bm x) + \Phi(\bm x'-\bm x'))\nonumber\\
\leq & 2(P_{\Phi,\mathbf{X}} + \sqrt{\Phi(\bm x-\bm x) - 2\Phi(\bm x'-\bm x) + \Phi(\bm x'-\bm x')}) \sqrt{\Phi(\bm x-\bm x) - 2\Phi(\bm x'-\bm x) + \Phi(\bm x'-\bm x')}\nonumber\\
\leq & 2(P_{\Phi,\mathbf{X}} + 2) \sqrt{\Phi(\bm x-\bm x) - 2\Phi(\bm x'-\bm x) + \Phi(\bm x'-\bm x')}.
\end{align}
Thus, if $P_{\Phi,\mathbf{X}}<1$, by a similar argument in (\ref{eq:hnorm1}) (with the choice $\beta=\alpha$), and together with (\ref{eq:h1norm2}) and (\ref{eq:h1norm3}), we have
\begin{align}\label{eq:entropyC1X}
\|h_1\|^2_{\mathcal{N}_\Psi(\mathbf{R}^d)}\leq C_2\|\bm x-\bm x'\|^{\alpha/2},
\end{align}
for a constant $C_2$.

In view of (\ref{eq:dist2}), (\ref{eq:entropyC1}) and (\ref{eq:entropyC1X}), there exists a constant $C_3$ such that
\begin{align}\label{eq:dist}
\mathfrak{d}(\bm x, \bm x')^2 \leq C_3 P_{\Phi,\mathbf{X}}\|\bm x-\bm x'\|^{\alpha/4}.
\end{align}
Therefore, the covering number is bounded above by
\begin{align}\label{e1}
\log N(\epsilon,\Omega,\mathfrak{d}) \leq \log N\bigg(\frac{\epsilon^{8/\alpha}}{C_3^{4/\alpha}P_{\Phi,\mathbf{X}}^{4/\alpha}},\Omega,\|\cdot\|\bigg).
\end{align}
The right side of (\ref{e1}) involves the covering number of a Euclidean ball, which is well understood in the literature. See Lemma 4.1 of \cite{pollard1990empirical}. 
This result leads to the bound
\begin{align}\label{eq:entropyBoundC1}
\log N(\epsilon,\Omega,\mathfrak{d})\leq C_{4,0}\log\bigg(\frac{C_{5,0}C_3^{4/\alpha}P_{\Phi,\mathbf{X}}^{4/\alpha}}{\epsilon^{8/\alpha}}\bigg)=:C_4\log\left(\frac{C_5 P^{1/2}_{\Phi,\mathbf{X}}}{\epsilon}\right),
\end{align}
provided that
\begin{align}\label{eq:Rcond1}
\epsilon<C_5P^{1/2}_{\Phi,\mathbf{X}},
\end{align}
where $C_{4,0}$ and $C_{5,0}$ are constants depending on the dimension and the Eucliden diameter of $\Omega$.


\noindent\textbf{Step 2: Bounding the diameter $D$}

Recall that the diameter is defined by $D = \sup_{\bm x, \bm x'\in \Omega}\mathfrak{d}(\bm x, \bm x')$. For any $\bm x,\bm x'\in \Omega$,
\begin{align}\label{eq:distD1}
\mathfrak{d}(\bm x, \bm x')^2 = &\mathbb{E}(g(\bm x)-g(\bm x'))^2 \leq  4\sup _{\bm x\in \Omega}\mathbb{E}(g(\bm x))^2\nonumber\\
                      = & 4\sup _{\bm x\in \Omega}\mathbb{E}(Z(\bm x)-\mathcal{I}_{\Phi,\mathbf{X}}Z(\bm x))^2\nonumber\\
                      = & 4\sup _{\bm x\in \Omega} (\Psi(\bm x-\bm x) - 2\bm r_1^T(\bm x)\mathbf{K}^{-1}r(\bm x) + \bm r^T(\bm x)\mathbf{K}^{-1}\mathbf{K}_1 \mathbf{K}^{-1}\bm r(\bm x)),
\end{align}
where $\bm r$, $\bm r_1$, $\mathbf{K}$ and $\mathbf{K}_1$ are defined in the beginning of Appendix \ref{App:proof}.

Combining identity (\ref{eq:identity}) with
\begin{align*}
\Psi(\bm x_j-\bm x) = \frac{1}{(2\pi)^d}\int_{\mathbb{R}^d}e^{i (\bm x-\bm x_j)^T\bm \omega }\tilde\Psi(\bm\omega)d\bm \omega,
\end{align*}
for any $\bm u = (u_1,...,u_n)$, under Condition \ref{C1}, we have
\begin{align}\label{eq:FrourierIdentity}
& \bm u^T \mathbf{K}_1 \bm u -2\bm u^T \bm r_1(\bm x) + \Psi(\bm x-\bm x)\nonumber\\
= & \frac{1}{(2\pi)^d}\int_{\mathbb{R}^d}\bigg|\sum_{j=1}^n u_j e^{i \bm x_j^T\bm\omega} - e^{i\bm x^T\bm\omega}\bigg|^2 \tilde\Psi(\bm\omega)d\bm\omega\nonumber\\
\leq & \frac{A_1^2}{(2\pi)^d}\int_{\mathbb{R}^d}\bigg|\sum_{j=1}^n u_j e^{i \bm x_j^T\bm\omega} - e^{i\bm x^T\bm\omega}\bigg|^2 \tilde\Phi(\bm\omega)d\bm\omega\nonumber\\
= & A_1^2(\bm u^T \mathbf{K} \bm u -2\bm u^T \bm r(\bm x) + \Phi(\bm x-\bm x)).
\end{align}
We can combine (\ref{eq:FrourierIdentity}) with (\ref{eq:distD1}) by
substituting $\bm u$ in (\ref{eq:FrourierIdentity}) by $\mathbf{K}^{-1}\bm r(\bm x)$ and arrive at
\begin{align*}
\mathfrak{d}(\bm x, \bm x')^2 \leq & 4A_1^2\sup_{\bm x \in \Omega}( \Phi(\bm x-\bm x) - \bm r(\bm x)\mathbf{K}^{-1}\bm r(\bm x)).
\end{align*}
Note that the upper bound of $\Phi(\bm x-\bm x) - \bm r(\bm x)\mathbf{K}^{-1}\bm r(\bm x)$ is $P_{\Phi,\mathbf{X}}^2$, which implies $\mathfrak{d}(\bm x, \bm x')^2 \leq 4A_1^2 P_{\Phi,\mathbf{X}}^2$.
Thus we conclude that
\begin{align}\label{eq:diamC1}
D\leq 2A_1 P_{\Phi,\mathbf{X}}.
\end{align}
\noindent\textbf{Step 3: Bounding the entropy integral}

Under Condition \ref{C1}, if $$P_{\Phi,\mathbf{X}}<C_5^2/A_1^2:=C,$$ (\ref{eq:Rcond1}) is satisfied for all $\epsilon\in [0,D/2]$. Thus, by (\ref{eq:entropyBoundC1}) and (\ref{eq:diamC1}),
\begin{align}\label{eq:boundEC1}
\int_0^{D/2} \sqrt{\log N(\epsilon,\Omega,\mathfrak{d})}d\epsilon & \leq \int_0^{A_1 P_{\Phi,\mathbf{X}}} \sqrt{C_4\log\bigg(  \frac{C_5P_{\Phi,\mathbf{X}}^{1/2}}{\epsilon}\bigg)}d\epsilon\nonumber\\
& \leq \left(\int_0^{A_1 P_{\Phi,\mathbf{X}}}d\epsilon \right)^{1/2}\left(\int_0^{A_1 P_{\Phi,\mathbf{X}}}C_4 \log \left(\frac{C_5 P_{\Phi,\mathbf{X}}}{\epsilon}d\epsilon\right)\right)^{1/2}\\
& =C_{4}^{1/2} A_1 P_{\Phi,\mathbf{X}}\sqrt{\log\left(\frac{C_5 e}{A_ 1 P_{\Phi,\mathbf{X}}^{1/2}}\right)}.
\end{align}
Because $P_{\Phi,\mathbf{X}}\leq 1$, the quantity inside the logrithm can be replaced by $e/P_{\Phi,\mathbf{X}}$ at the cost of (possibly) increasing the constant $C_4$.


\noindent\textbf{Step 4: Bounding $\mathbb{P}(\sup_{\bm x\in\Omega} |Z(\bm x)-\mathcal{I}_{\Phi,\mathbf{X}}Z(\bm x)| > K\int_0^{D/2} \sqrt{\log N(\epsilon,T,\mathfrak{d})}d\epsilon + u)$}

Noting that $\sup_{\bm x\in \Omega}\mathbb{E}(Z(\bm x)-\mathcal{I}_{\Phi,\mathbf{X}}Z(\bm x))^2 = D^2$, by plugging (\ref{eq:diamC1}) into (1.8) in the supplementary materials, we obtain the desired inequality, which completes the proof.

\section{Proof of Theorem \ref{Th:BLUP}}\label{app:B}
	
Denote $\mathbf{Z}=(Z(\bm x_1),\ldots,Z(\bm x_n))^T$.
	Direct calculations show that	
	\begin{align}
	& Y(\bm x)-\hat{Y}_{\mathbf{BLUP},\Phi}(\bm x)\nonumber\\
	= &\underbrace{Z(\bm x)-\bm r^T(\bm x)\mathbf{K}^{-1}\mathbf{Z}}_{I_1}
	-\underbrace{(\bm f^T(\bm x)-\bm r^T(\bm x)\mathbf{K}^{-1}\mathbf{F})(\mathbf{F}^T\mathbf{K}^{-1}\mathbf{F})^{-1}\mathbf{F}^T\mathbf{K}^{-1}\mathbf{Z}}_{I_2}.
	\end{align}
	Thus
	$\sup_{\bm x\in \Omega}|Y(\bm x)-\hat{Y}_{\mathbf{BLUP},\Phi}(\bm x)|\leq \sup_{\bm x\in \Omega}|I_1|+\sup_{\bm x\in \Omega}|I_2|.$
	Clearly, $\sup_{\bm x\in \Omega}|I_1|$ is the uniform error of simple kriging, which is studied in Section \ref{sec:bounds}. Corollary \ref{Coro:1} suggests that $\mathbb{E}\sup_{\bm x\in \Omega}|I_1|=O(P_{\Phi,\mathbf{X}}\log^{1/2}(1/P_{\Phi,\mathbf{X}}))$.
	
Now we turn to $I_2$. By Cauchy-Schwarz inequality,
\begin{align}
|I_2|= & |(\bm f^T(\bm x)-\bm r^T(\bm x)\mathbf{K}^{-1}\mathbf{F})(\mathbf{F}^T\mathbf{K}^{-1}\mathbf{F})^{-1}\mathbf{F}^T\mathbf{K}^{-1}\mathbf{Z}|\nonumber\\
\leq&\left\{(\bm f^T(\bm x)-\bm r^T(\bm x)\mathbf{K}^{-1}\mathbf{F})(\bm f^T(\bm x)-\bm r^T(\bm x)\mathbf{K}^{-1}\mathbf{F})^T\right\}^{1/2}\nonumber\\
&\cdot\left\{\mathbf{Z}^T\mathbf{K}^{-1}\mathbf{F}(\mathbf{F}^T\mathbf{K}^{-1}\mathbf{F})^{-1}(\mathbf{F}^T\mathbf{K}^{-1}\mathbf{F})^{-1}\mathbf{F}^T\mathbf{K}^{-1}\mathbf{Z}\right\}^{1/2}.\label{decomposition}
\end{align}
Note that the right-hand side of (\ref{decomposition}) is the product of a deterministic function and a random variable independent of $\bm x$.

Clearly, the $j$th entry of $\bm f^T(\bm x)-\bm r^T(\bm x)\mathbf{K}^{-1}\mathbf{F}$ is $f_j(\bm x)-\mathcal{I}_{\Phi,X}f_j(\bm x)$, whose absolute value is bounded above by $P_{\Phi,\mathbf{X}}\|f_j\|_{\mathcal{N}_\Phi(\Omega)}$. See Theorem 11.4 of \cite{wendland2004scattered}, also see (1.3) in the supplementary materials. Therefore,
$$(\bm f^T(\bm x)-\bm r^T(\bm x)\mathbf{K}^{-1}\mathbf{F})(\bm f^T(\bm x)-\bm r^T(\bm x)\mathbf{K}^{-1}\mathbf{F})^T\leq P^2_{\Phi,\mathbf{X}}\sum_{j=j}^p\|f_j\|^2_{\mathcal{N}_\Phi(\Omega)} $$

Our final goal is to bound
\begin{align*}
	&\left(\mathbb{E}\left\{\mathbf{Z}^T\mathbf{K}^{-1}\mathbf{F}(\mathbf{F}^T\mathbf{K}^{-1}\mathbf{F})^{-1}(\mathbf{F}^T\mathbf{K}^{-1}\mathbf{F})^{-1}\mathbf{F}^T\mathbf{K}^{-1}\mathbf{Z}\right\}^{1/2}\right)^2\\
	\leq &\mathbb{E}\mathbf{Z}^T\mathbf{K}^{-1}\mathbf{F}(\mathbf{F}^T\mathbf{K}^{-1}\mathbf{F})^{-1}(\mathbf{F}^T\mathbf{K}^{-1}\mathbf{F})^{-1}\mathbf{F}^T\mathbf{K}^{-1}\mathbf{Z}\\
	=&\mathbb{E}\mathbf{Tr}[(\mathbf{F}^T\mathbf{K}^{-1}\mathbf{F})^{-1}\mathbf{F}^T\mathbf{K}^{-1} \mathbf{Z}\mathbf{Z}^T \mathbf{K}^{-1}\mathbf{F}(\mathbf{F}^T\mathbf{K}^{-1}\mathbf{F})^{-1}]\\
	=&\mathbf{Tr}[(\mathbf{F}^T\mathbf{K}^{-1}\mathbf{F})^{-1}\mathbf{F}^T\mathbf{K}^{-1} \mathbf{K}_1 \mathbf{K}^{-1}\mathbf{F}(\mathbf{F}^T\mathbf{K}^{-1}\mathbf{F})^{-1}],
\end{align*}
where $\mathbf{K}_1=(\Psi(\bm x_j-\bm x_k))_{jk}$ is the true correlation.
Via the treatment used in (\ref{eq:identity})-(\ref{eq:h1norm2}), it can be shown that
\begin{eqnarray}\label{cancelK}
\bm \alpha^T \mathbf{K}^{-1}\mathbf{K}_1\mathbf{K}^{-1}\bm \alpha\leq C \bm \alpha^T\mathbf{K}^{-1}\bm \alpha,
\end{eqnarray}
for any $\bm \alpha$ and a constant $C$ depending only on $\|\tilde{\Psi}/\tilde{\Phi}\|_{L_\infty(\mathbf{R}^d)}$, which implies
\begin{align*}
	&\mathbf{Tr}[(\mathbf{F}^T\mathbf{K}^{-1}\mathbf{F})^{-1}\mathbf{F}^T\mathbf{K}^{-1} \mathbf{K}_1 \mathbf{K}^{-1}\mathbf{F}(\mathbf{F}^T\mathbf{K}^{-1}\mathbf{F})^{-1}]\\
	\leq& C \mathbf{Tr}[(\mathbf{F}^T\mathbf{K}^{-1}\mathbf{F})^{-1}]\leq C p/\lambda_{min}(\mathbf{F}^T\mathbf{K}^{-1}\mathbf{F}).
\end{align*}
For $\bm \alpha=(\alpha_1,\ldots,\alpha_p)^T$, we have
\begin{align*}
\lambda_{min}(\mathbf{F}^T\mathbf{K}^{-1}\mathbf{F})=\min_{\|\bm \alpha\|=1}\bm \alpha^T\mathbf{F}^T\mathbf{K}^{-1}\mathbf{F}\bm \alpha=\min_{\|\bm \alpha\|=1}\left\|\mathcal{I}_{\Phi,\mathbf{X}}\sum_{j=1}^p \alpha_j f_j(\bm x)\right\|^2_{\mathcal{N}_\Phi(\Omega)}.
\end{align*}
	Now take a $p$-point subset of $\mathbf{X}$, denoted by $\mathbf{X}_p=\{\bm x'_1,\ldots,\bm x'_p\}$. Define $\mathbf{K}_p=(\Phi(\bm x'_j-\bm x'_k ))_{jk}$ and $\mathbf{F}_p=(\bm f(\bm x'_1),\ldots, \bm f(\bm x'_p))$. Then by (1.5) in the supplementary materials, we have
	\begin{align}
	&\lambda_{min}(\mathbf{F}^T\mathbf{K}^{-1}\mathbf{F})\geq \min_{\|\bm \alpha\|=1}\left\|\mathcal{I}_{\Phi,\mathbf{X}_p}\sum_{j=1}^p \alpha_j f_j(\bm x)\right\|^2_{\mathcal{N}_\Phi(\Omega)}\nonumber\\
	=&\min_{\|\bm \alpha\|=1}\bm \alpha^T\mathbf{F}^T_p\mathbf{K}^{-1}_p\mathbf{F}_p\bm \alpha
	\geq \min_{\bm \alpha\neq 0}\frac{\bm \alpha^T\mathbf{F}^T_p\mathbf{K}^{-1}_p\mathbf{F}_p\bm \alpha}{\alpha^T\mathbf{F}^T_p\mathbf{F}_p\bm \alpha}
	\min_{\bm \alpha\neq 0}\frac{\alpha^T\mathbf{F}^T_p\mathbf{F}_p\bm \alpha}{\bm \alpha^T\bm \alpha}	\nonumber\\
	\geq&\lambda_{min}(\mathbf{K}^{-1}_p)\lambda_{min}(\mathbf{F}^T_p\mathbf{F}_p)\geq \lambda_{min}(\mathbf{F}^T_p\mathbf{F}_p)/\mathbf{Tr}(\mathbf{K}_p)=\lambda_{min}(\mathbf{F}^T_p\mathbf{F}_p)/p.\label{eigenlower}
	\end{align}
	Because $\mathbf{X}'_p$ can be chosen as an arbitrary $p$-point subset, the right-hand side of (\ref{eigenlower}) can be replaced by the maximum value over all possible choices of $\mathbf{F}_p$, which completes the proof.
	
	\section{Proof of Theorem \ref{Th:inconsistency}}
	We use the notation in the proof of Theorem \ref{Th:BLUP}. It is easily verified that $\hat{\beta}-\beta=(\mathbf{F}^T\mathbf{K}^{-1}\mathbf{F})^{-1}\mathbf{F}^T\mathbf{K}^{-1}\mathbf{Z}$. Because $\Phi=\Psi$, $\text{Var}(Z)=\mathbf{K}$. Therefore
	$$\text{Var}(\hat{\bm \beta}-\bm \beta)=(\mathbf{F}^T\mathbf{K}^{-1}\mathbf{F})^{-1}. $$
	Let $\bm \alpha=(\alpha_1,\ldots,\alpha_p)^T$ be an arbitrary vector. Then
	\begin{eqnarray*}
	\bm \alpha^T \mathbf{F}^T\mathbf{K}^{-1}\mathbf{F} \bm \alpha =\left\|\mathcal{I}_{\Phi,\mathbf{X}} \sum_{j=1}^p \alpha_j f_j\right\|_{\mathcal{N}_\Phi(\Omega)}^2\leq \left\| \sum_{j=1}^p \alpha_j f_j\right\|_{\mathcal{N}_\Phi(\Omega)}^2=\bm \alpha^T \mathbf{V}\bm \alpha,
	\end{eqnarray*}
	where the inequality follows from Corollary 10.25 of \cite{wendland2004scattered}; also see (1.4) in the supplementary materials. Clearly, $\mathbf{V}$ is positive definite, because $f_j$'s are linearly independent. Then then desired result follows from the fact that if $\mathbf{A}\leq \mathbf{B}$ then $\mathbf{A}^{-1}\geq \mathbf{B}^{-1}$.

\section{Proof of Theorem \ref{Th:randomkernel}}\label{app:D}

First we prove the simple kriging version. Similar to the proof of Theorem \ref{Th:main}, we examine the distance defined by
\begin{eqnarray*}
\mathfrak{d}^2((\bm x_1,\bm \theta_1),(\bm x_2,\bm \theta_2))=\mathbb{E}\left[Z(\bm x_1)-\mathcal{I}_{\Phi_{\bm \theta_1},\mathbf{X}}Z(\bm x_1)-Z(\bm x_2)+\mathcal{I}_{\Phi_{\bm \theta_2},\mathbf{X}}Z(\bm x_2)\right]^2.
\end{eqnarray*}
It follows from a similar argument as in Theorem \ref{Th:main} that the diameter of $\mathfrak{d}$ is no more than a multiple of $P_{\mathbf{X}}$. It remains to study the cover number given by $\mathfrak{d}$. First we can separate the effect of $\bm x$ and $\bm \theta$ using the following inequality
\begin{align}\label{d2}
  \mathfrak{d}^2((\bm x_1,\bm \theta_1),(\bm x_2,\bm \theta_2))\leq& 2\mathbb{E}\left[Z(\bm x_1)-\mathcal{I}_{\Phi_{\bm \theta_1},\mathbf{X}}Z(\bm x_1)-Z(\bm x_2)+\mathcal{I}_{\Phi_{\bm \theta_1},\mathbf{X}}Z(\bm x_2)\right]^2\nonumber\\
& + 2\mathbb{E}\left[\mathcal{I}_{\Phi_{\bm \theta_1},\mathbf{X}}Z(\bm x_2)-\mathcal{I}_{\Phi_{\bm \theta_2},\mathbf{X}}Z(\bm x_2)\right]^2
\end{align}
The first term in (\ref{d2}) is studied in the proof of Theorem \ref{Th:main}. It suffices to show that
$$\mathbb{E}\left[\mathcal{I}_{\Phi_{\bm \theta_1},\mathbf{X}}Z(\bm x)-\mathcal{I}_{\Phi_{\bm \theta_2},\mathbf{X}}Z(\bm x)\right]^2\leq C P_{\mathbf{X}}^2\|\theta_1-\theta_2\|^2, $$
for all $x\in\Omega$ and some constant $C$.
Let $\mathbf{K}_{\bm \theta_l}=(\Phi_{\bm \theta_l}(\bm x_j-\bm x_k))_{jk}$, and $\bm r_{\bm \theta_l}=(\Phi_{\bm \theta_l}(\bm x-\bm x_1),\ldots,\Phi_{\bm \theta_l}(\bm x-\bm x_n))^T$, for $l=1,2$. By the mean value theorem, we have
\begin{align*}
&\left|\mathcal{I}_{\Phi_{\bm \theta_1},\mathbf{X}}Z(\bm x)-\mathcal{I}_{\Phi_{\bm \theta_2},\mathbf{X}}Z(\bm x)\right|=\left|\mathbf{Z}^T(\mathbf{K}^{-1}_{\bm \theta_1}\bm r_{\bm \theta_1}-\mathbf{K}^{-1}_{\bm \theta_2}\bm r_{\bm \theta_2})\right|\\
\leq&\max_{\bm \theta\in\Theta} \left\|\mathbf{Z}^T\frac{\partial}{\partial \bm \theta}(\mathbf{K}^{-1}_{\bm \theta}\bm r_{\bm \theta}) \right\|\|\bm\theta_1-\bm \theta_2\|.
\end{align*}
It remains to prove that $\mathbb{E}[\mathbf{Z}^T\partial (\mathbf{K}^{-1}_{\bm \theta}\bm r_{\bm \theta})/\partial \theta_l]^2\leq C P_{\mathbf{X}}^2,$ for all $\bm \theta\in\Theta$ and $l=1,\ldots,q$.
For notational simplicity, we denote $\mathbf{K}:=\mathbf{K}_{\bm \theta}, \bm r:=\bm r_{\bm \theta}, \dot{\mathbf{K}}=\partial \mathbf{K}/\partial \theta_l, \dot{\bm r}=\partial \bm r/\partial \theta_l, \dot{\Phi}=\partial \Phi/\partial \theta_l$ and $\dot{\tilde{\Phi}}=\partial \tilde{\Phi}/\partial \theta_l$. As before, denote the covariance matrix of $\mathbf{Z}$ by $\mathbf{K}_1$. Then
\begin{align*}
\mathbb{E}[\mathbf{Z}^T\partial (\mathbf{K}^{-1}\bm r)/\partial \theta_l]^2=&(\bm r^T\mathbf{K}^{-1}\dot{\mathbf{K}}\mathbf{K}^{-1}- \dot{\bm r}^T\mathbf{K}^{-1})\mathbf{K}_1(\mathbf{K}^{-1}\dot{\mathbf{K}}\mathbf{K}^{-1}\bm r-\mathbf{K}^{-1} \dot{\bm r})\\
\leq& C_1 (\bm r^T\mathbf{K}^{-1}\dot{\mathbf{K}}- \dot{\bm r}^T)\mathbf{K}^{-1}(\dot{\mathbf{K}}\mathbf{K}^{-1}\bm r- \dot{\bm r}),
\end{align*}
where the inequality follows from (\ref{cancelK}).

Define $\bm u=(u_1,\ldots,u_n)^T:=\mathbf{K}^{-1}\bm r$, and $h(\bm y):=\sum_{j=1}^n u_j \dot{\Phi}(\bm y-\bm x_j)-\dot{\Phi}(\bm y-\bm x)$. Clearly, the $j$th entry of $\dot{\mathbf{K}}\mathbf{K}^{-1}\bm r-\dot{\bm r}$ is $h(\bm x_j)$. Thus
$$(\bm r^T\mathbf{K}^{-1}\dot{\mathbf{K}}-\dot{\bm r}^T)\mathbf{K}^{-1}(\dot{\mathbf{K}}\mathbf{K}^{-1}\bm r- \dot{\bm r})=\|\mathcal{I}_{\Phi,\mathbf{X}}h\|^2_{\mathcal{N}_\Phi(\mathbf{R}^d)}\leq \|h\|^2_{\mathcal{N}_\Phi(\mathbf{R}^d)}. $$
Finally, we use Fourier transform to calculate $\|h\|^2_{\mathcal{N}_\Phi(\mathbf{R}^d)}$. It is worth noting that the Fourier transform is performed with respect to $\bm y$, not $\bm x$. It is easy to find that $\tilde{h}(\omega)=(\sum_{j=1}^n u_je^{-i\omega \bm x_j}-e^{-i\omega \bm x})\dot{\tilde{\Phi}}(\omega)$, which implies
\begin{align}
\|h\|^2_{\mathcal{N}_\Phi(\mathbf{R}^d)}=&\int_{\mathbf{R}^d} \frac{\tilde{h}(\omega)\bar{\tilde{h}}(\omega)}{\tilde{\Phi}(\omega)}d \omega=\int_{\mathbf{R}^d}\left|\sum_{j=1}^n u_j e^{i\omega \bm x_j}-e^{i\omega \bm x}\right|^2\frac{\dot{\tilde{\Phi}}^2(\omega)}{\tilde{\Phi}(\omega)}d\omega \\
\leq& A_2\int_{\mathbf{R}^d}\left|\sum_{j=1}^n u_j e^{i\omega \bm x_j}-e^{i\omega \bm x}\right|^2\tilde{\Phi}(\omega)d\omega=C_2P_{\Phi,\mathbf{X}}^2\leq C_2P_{\mathbf{X}}^2,
\end{align}
where the first inequality follows from the condition that $|\partial \log \tilde{\Phi}/\partial \bm \theta_l|=|\dot{\tilde{\Phi}}/\tilde{\Phi}|\leq A_2$.

The proof for the universal kriging case follows from similar lines as that of Theorem \ref{Th:BLUP}. Hence we complete the proof.

\end{appendix}

\newpage
  \begin{center}
    {\LARGE\bf  Supplement to ``On Prediction Properties of Kriging: Uniform Error Bounds and Robustness''}
\end{center}

\setcounter{section}{0}
\renewcommand{\thesection}{\arabic{section}}

\section{Auxiliary tools}\label{App:tools}

In this section, we review some mathematical tools which are used in the proofs presented in Appendix.

\subsection{Reproducing kernel Hilbert spaces}\label{App:RKHS}

In this subsection we introduce the reproducing kernel Hilbert spaces and several results from literature. Let $\Omega$ be a subset of $\mathbf{R}^d$. Assume that $K:\Omega \times \Omega \rightarrow \mathbf{R}$ is a symmetric positive definite kernel. Define the linear space
\begin{eqnarray}\label{FPhi}
F_{K}(\Omega)=\left\{\sum_{i=1}^n\beta_i K(\cdot,{x}_i):\beta_i\in \mathbf{R},{x}_i\in \Omega,n\in\mathbb{N}\right\},
\end{eqnarray}
and equip this space with the bilinear form
\begin{eqnarray}
\left\langle\sum_{i=1}^n\beta_i K(\cdot,{x}_i),\sum_{j=1}^m\gamma_j K(\cdot, x'_j)\right\rangle_K:=\sum_{i=1}^n\sum_{j=1}^m\beta_i\gamma_j K({x}_i, x'_j).
\label{eq1.2}
\end{eqnarray}
Then the \emph{reproducing kernel Hilbert space} $\mathcal{N}_{K}(\Omega)$ generated by the kernel function $K$ is defined as the closure of $F_{K}(\Omega)$ under the inner product $\langle\cdot,\cdot\rangle_{K}$, and the norm of  $\mathcal{N}_{K}(\Omega)$  is $\| f\|_{\mathcal{N}_{K}(\Omega)}=\sqrt{\langle f,f\rangle_{\mathcal{N}_{K}(\Omega)}}$, where $\langle\cdot,\cdot\rangle_{\mathcal{N}_{K}(\Omega)}$ is induced by $\langle \cdot,\cdot\rangle_{K}$. More detail about reproducing kernel Hilbert space can be found in \cite{wendland2004scattered} and \cite{wahba1990spline}. In particular, we have the following theorem, which gives another characterization of the reproducing kernel Hilbert space when $K$ is defined by a stationary kernel function $\Phi$, via the Fourier transform of $\Phi$.



\begin{theorem}[Theorem 10.12 of \cite{wendland2004scattered}]\label{thm:NativeSpace}
Let $\Phi$ be a positive definite kernel function which is continuous and integrable in $\mathbf{R}^d$. Define
	$$\mathcal{G}:=\{f\in L_2(\mathbf{R}^d)\cap C(\mathbf{R}^d):\tilde{f}/\sqrt{\tilde{\Phi}}\in L_2(\mathbf{R}^d)\},$$
	with the inner product
	$$\langle f,g\rangle_{\mathcal{N}_\Phi(\mathbf{R}^d)}=(2\pi)^{-d}\int_{\mathbf{R}^d}\frac{\tilde{f}(\bm \omega)\overline{\tilde{g}(\bm \omega)}}{\tilde{\Phi}(\bm \omega)}d \bm \omega.$$ Then $\mathcal{G} = \mathcal{N}_\Phi(\mathbf{R}^d)$, and both inner products coincide.
\end{theorem}

For $f\in \mathcal{N}_\Phi(\Omega)$, a pointwise error bound for the radial basis function interpolation is given by (\cite{wendland2004scattered}, Theorem 11.4):
\begin{eqnarray}\label{firstestimate2}
|f(\bm x)-\mathcal{I}_{\Phi,\mathbf{X}}f(\bm x)|\leq P_{\Phi,\mathbf{X}}(\bm x)\|f\|_{\mathcal{N}_\Phi(\Omega)}.
\end{eqnarray}
In addition, it can be shown that the interpolant $\mathcal{I}_{\Phi,\mathbf{X}}f(\bm x)$ satisfies the following properties (Corollary 10.25, \cite{wendland2004scattered}):
\begin{align}\label{RKHSnormsmall}
    \|\mathcal{I}_{\Phi,\mathbf{X}}f(\bm x)\|_{\mathcal{N}_\Phi(\Omega)} \leq \|f\|_{\mathcal{N}_\Phi(\Omega)}.
\end{align}
In addition, if $\mathbf{X}'\subset\mathbf{X}$, \begin{align}\label{RKHSnormsmallsub}
\|\mathcal{I}_{\Phi,\mathbf{X}'}h\|_{\mathcal{N}_\Phi(\Omega)}\leq \|\mathcal{I}_{\Phi,\mathbf{X}}h\|_{\mathcal{N}_\Phi(\Omega)}.
\end{align}


\subsection{A Maximum inequality for Gaussian processes}


The theory of bounding the maximum value of a Gaussian process is well-established in the literature. The main step of finding an upper bound is to calculate the \textit{covering number} of the index space. Here we review the main results. Detailed discussions can be found in \cite{adler2009random}.

Let $Z_t$ be a Gaussian process indexed by $t\in T$. Here $T$ can be an arbitrary set. The Gaussian process $Z_t$ induces a metric on $T$, defined by
\begin{eqnarray}\label{metric}
\mathfrak{d}(t_1,t_2)=\sqrt{\mathbb{E}(Z_{t_1}-Z_{t_2})^2}.
\end{eqnarray}
The $\epsilon$-covering number of the metric space $(T,\mathfrak{d})$, denoted as $N(\epsilon,T,\mathfrak{d})$, is the minimum integer $N$ so that there exist $N$ distinct balls in $(T,\mathfrak{d})$ with radius $\epsilon$, and the union of these balls covers $T$. Let $D$ be the diameter of $T$. The supremum of a Gaussian process is closely tied to a quantity called the \textit{entropy integral}, defined as
\begin{eqnarray}\label{entropy}
\int_0^{D/2} \sqrt{\log N(\epsilon,T,\mathfrak{d})}d\epsilon.
\end{eqnarray}
Lemma \ref{Th:maximum} gives a maximum inequality for Gaussian processes, which is a direct consequence of Theorems 1.3.3 and 2.1.1 of \cite{adler2009random}.

\begin{lemma}\label{Th:maximum}
Let $Z_t$ be a centered separable Gaussian process on a $\mathfrak{d}$-compact $T$, $\mathfrak{d}$ the metric, and $N$ the $\epsilon$-covering number. Then there exists a universal constant $K$ such that for all $u>0$,
\begin{align}\label{BorellTIS}
\mathbb{P}(\sup_{t\in T} |Z_t| > K\int_0^{D/2} \sqrt{\log N(\epsilon,T,\mathfrak{d})}d\epsilon + u)\leq 2 e^{-u^2/2\sigma^2_T},
\end{align}
where $\sigma^2_T = \sup_{t\in T} \mathbb{E}Z_t^2$.
\end{lemma}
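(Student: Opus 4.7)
The plan is to derive Lemma \ref{Th:maximum} by combining two classical pillars of Gaussian process theory: the Dudley entropy bound for the expected supremum (Theorem 2.1.1 in Adler and Taylor) and the Borell--Tsirelson--Ibragimov--Sudakov (Borell--TIS) concentration inequality for suprema of Gaussian processes (Theorem 1.3.3 in Adler and Taylor). The separability assumption lets us reduce $\sup_{t\in T}Z_t$ to a supremum over a countable dense subset, so all the relevant random variables are measurable and Gaussian-process arguments apply.

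First I would bound the expected supremum. By a standard chaining argument along a nested sequence of $\epsilon$-nets (with mesh size $\epsilon_k = D\cdot 2^{-k}$), Dudley's theorem gives a universal constant $K_1$ with
\begin{equation*}
\mathbb{E}\sup_{t\in T} Z_t \;\leq\; K_1 \int_0^{D/2}\sqrt{\log N(\epsilon,T,\mathfrak{d})}\,d\epsilon.
\end{equation*}
Because $Z_t$ is centered and $(-Z_t)_{t\in T}$ has the same law as $(Z_t)_{t\in T}$, the same bound holds for $\mathbb{E}\sup_{t\in T}(-Z_t)$. This step is essentially the heart of the argument, but since Adler--Taylor's Theorem 2.1.1 is quoted wholesale, I do not need to redo the chaining.

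Next I would apply the Borell--TIS concentration inequality separately to $\sup_{t\in T} Z_t$ and $\sup_{t\in T}(-Z_t)$. For each, the inequality yields
\begin{equation*}
\mathbb{P}\bigl(\sup_{t\in T} Z_t \,-\, \mathbb{E}\sup_{t\in T} Z_t \,>\, u\bigr) \;\leq\; e^{-u^2/(2\sigma_T^2)},
\end{equation*}
and similarly with $Z_t$ replaced by $-Z_t$, using the same $\sigma_T^2=\sup_{t\in T}\mathbb{E}Z_t^2$. Chaining these two inputs together, on the event where both centered suprema stay below $u$, one has $\sup_{t\in T}|Z_t|=\max\{\sup Z_t,\sup(-Z_t)\}\leq K_1\int_0^{D/2}\sqrt{\log N(\epsilon,T,\mathfrak{d})}\,d\epsilon + u$.

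Finally I would take a union bound over the two events (one for $Z_t$, one for $-Z_t$), producing the factor of $2$ in the tail probability and yielding the stated inequality with $K=K_1$. The only real obstacle is a cosmetic one: making sure the absolute value is handled cleanly, which the symmetry argument above resolves. Separability and $\mathfrak{d}$-compactness ensure the entropy integral is well-defined and the supremum is a measurable random variable with finite second moment, so no additional regularity work is required.
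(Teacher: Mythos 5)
Your proposal is correct and matches the paper's route exactly: the paper gives no proof beyond stating that the lemma is a direct consequence of Theorems 1.3.3 and 2.1.1 of \cite{adler2009random}, i.e.\ the Dudley entropy bound for $\mathbb{E}\sup_{t\in T}Z_t$ combined with the Borell--TIS concentration inequality, which is precisely the combination you carry out. Your added care with the symmetry argument for $\sup_{t\in T}|Z_t|=\max\{\sup_t Z_t,\sup_t(-Z_t)\}$ and the union bound producing the factor $2$ is a sound way to fill in the details the paper leaves implicit.
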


\section{Additional figure related to Table 2}

\begin{figure}[h!]
    \centering
    \begin{subfigure}
        \centering
        \includegraphics[height=2.7in]{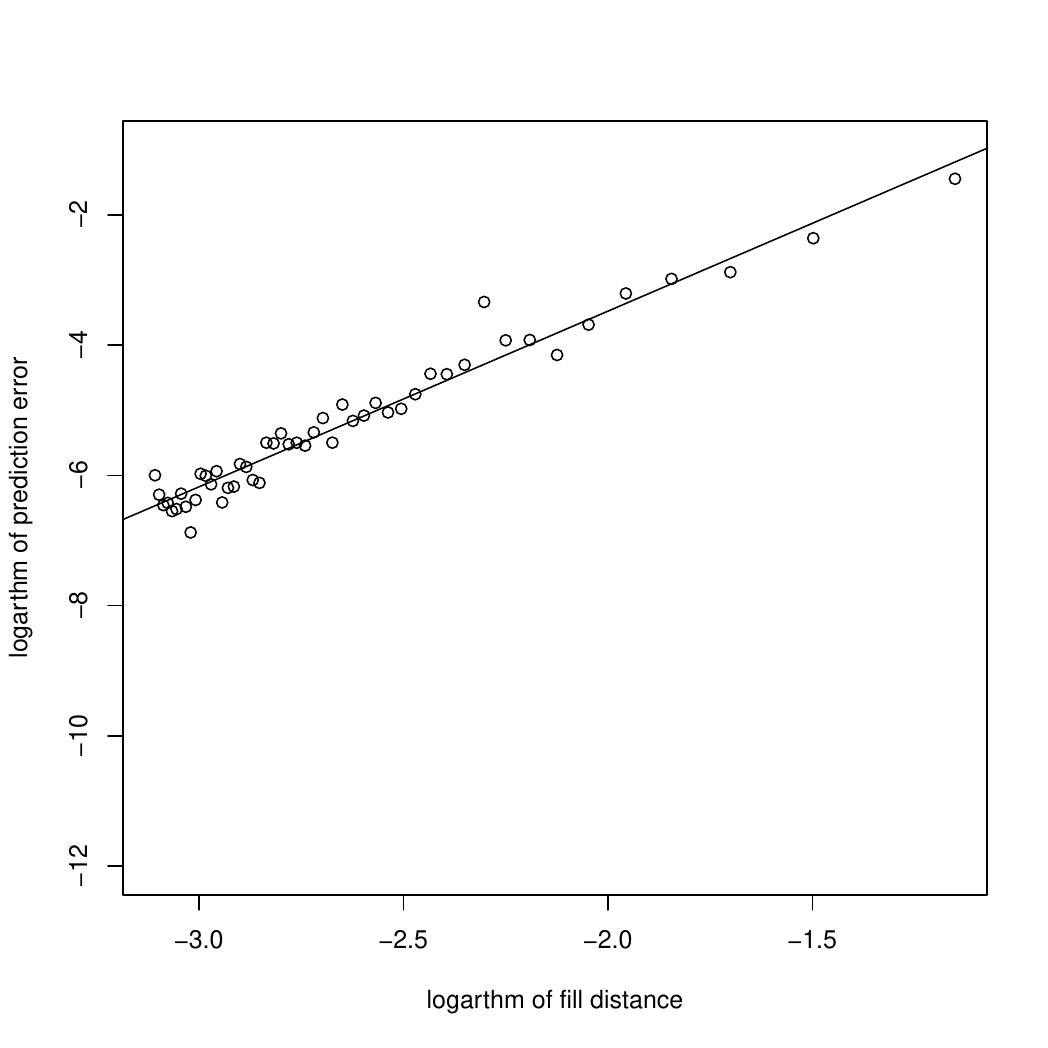}
    \end{subfigure}
    \begin{subfigure}
        \centering
        \includegraphics[height=2.7in]{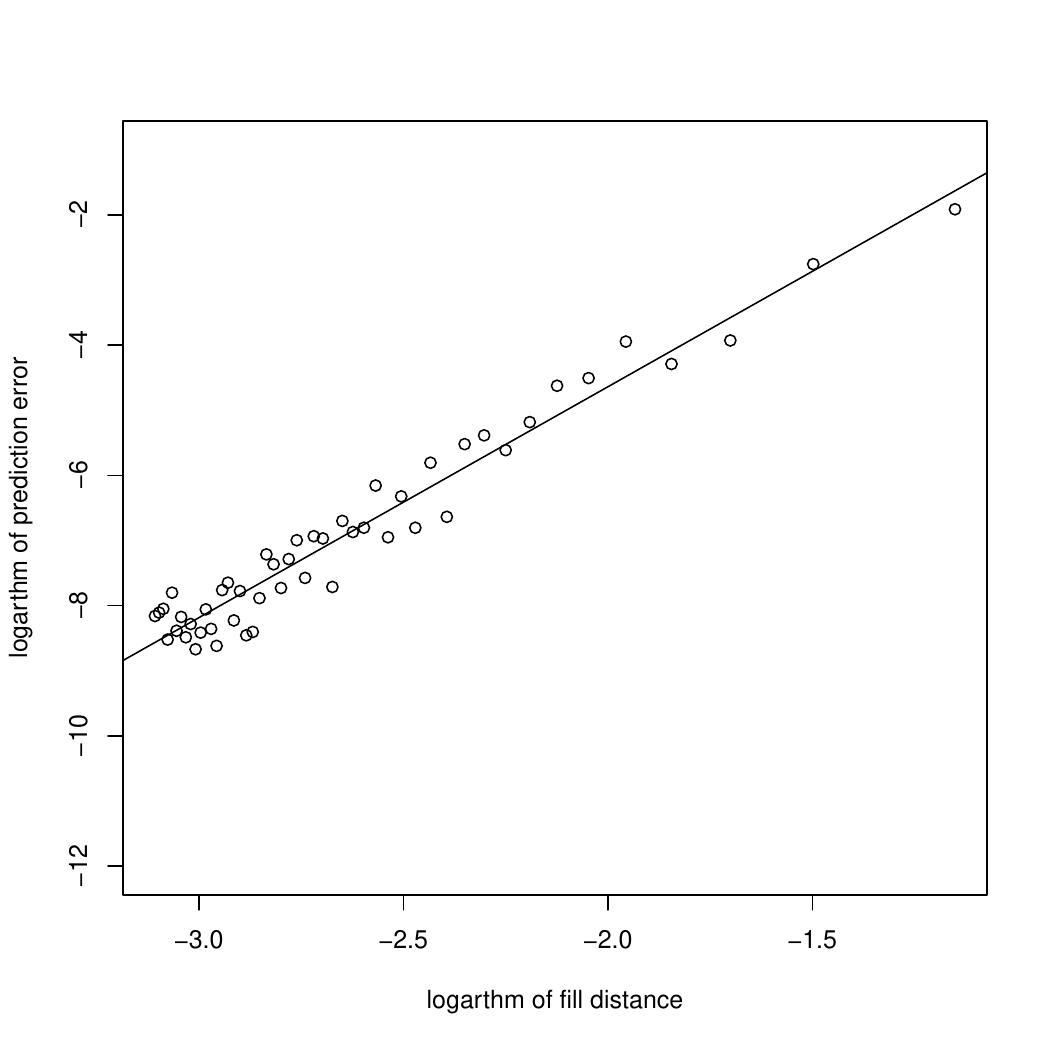}
    \end{subfigure}
    \begin{subfigure}
        \centering
        \includegraphics[height=2.7in]{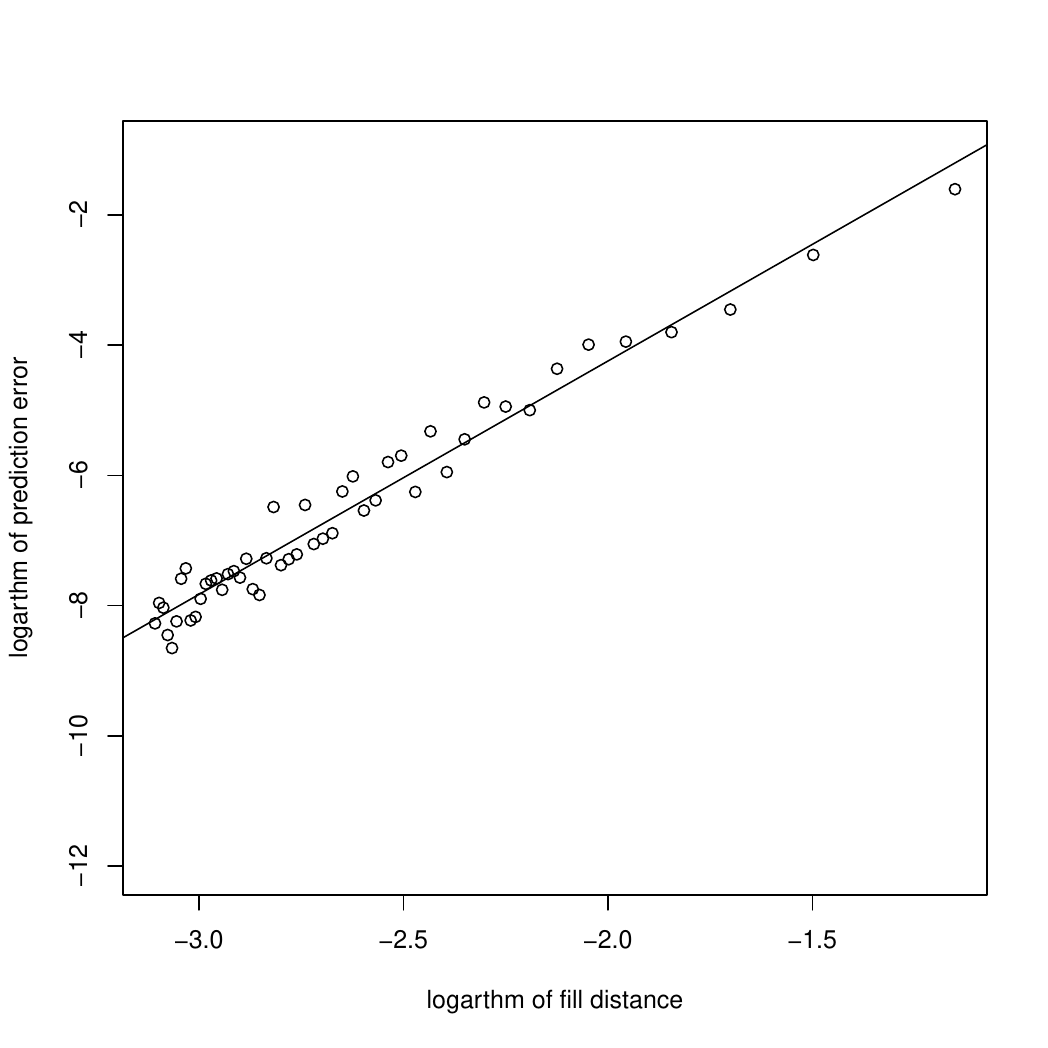}
    \end{subfigure}
    \begin{subfigure}
        \centering
        \includegraphics[height=2.7in]{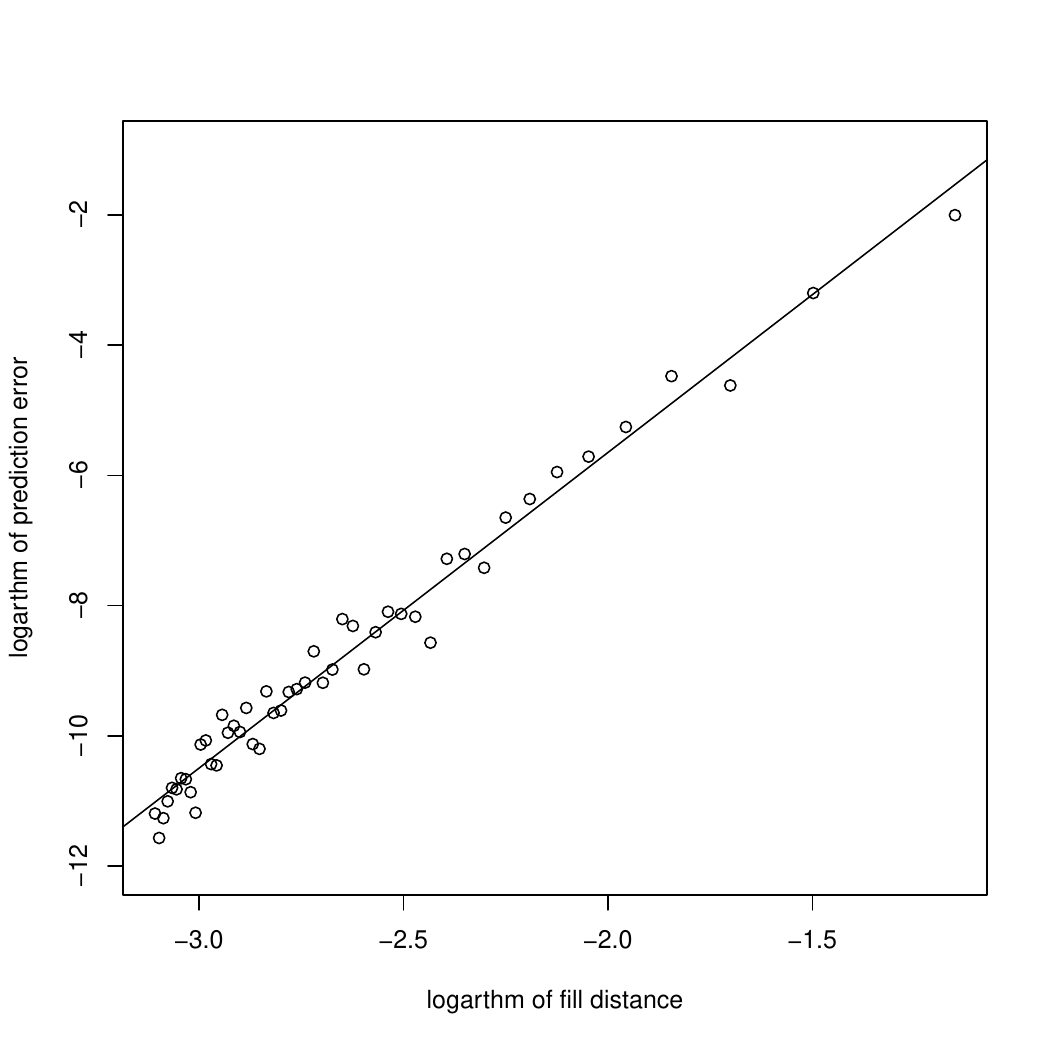}
    \end{subfigure}
    \caption{The regression line of $\log \sup_{\bm x\in \Omega}\epsilon(\bm x)$ on $\log h_{\mathbf{X}}$. Each point denotes one average prediction error for each $n$. {\bf Panel 1:} $\nu_0 = 3$, $\nu = 2.5$. {\bf Panel 2:} $\nu_0 = 5$, $\nu = 3.5$. {\bf Panel 3:} $\nu_0 = \nu = 3.5$. {\bf Panel 4:} $\nu_0 = \nu = 5$.}
    \label{fig:nu0big}
\end{figure}
Figure \ref{fig:nu0big} shows the relationship between the logarithm of the fill distance (i.e., $\log h_{\mathbf{X}}$) and the logarithm of the average prediction error (i.e., $\log \mathcal{E}$) in scatter plots for the four cases given in Table 2. The solid line in each panel shows the linear regression fit calculated from the data. Each of the regression lines in Figure \ref{fig:nu0big} fits the data very well, which gives an empirical confirmation of the approximation in (4.2) in the main text. It is also observed from Figure \ref{fig:nu0big} that, as the fill distance decreases, the maximum prediction error also decreases.

\bibliography{universal}

\begin{thebibliography}{}

\bibitem[\protect\citeauthoryear{Adler and Taylor}{Adler and
  Taylor}{2009}]{adler2009random}
Adler, R.~J. and J.~E. Taylor (2009).
\newblock {\em Random Fields and Geometry}.
\newblock Springer Science \& Business Media.

\bibitem[\protect\citeauthoryear{Banerjee, Carlin, and Gelfand}{Banerjee
  et~al.}{2004}]{banerjee2004hierarchical}
Banerjee, S., B.~P. Carlin, and A.~E. Gelfand (2004).
\newblock {\em Hierarchical Modeling and Analysis for Spatial Data}.
\newblock CRC Press, Taylor \& Francis Group.

\bibitem[\protect\citeauthoryear{Buslaev and Seleznjev}{Buslaev and
  Seleznjev}{1999}]{seleznjev1999certain}
Buslaev, A. and O.~Seleznjev (1999).
\newblock On certain extremal prolems in the theory of approximation of random
  processes.
\newblock {\em East Journal on Approximations\/}~{\em 5\/}(4), 467--481.

\bibitem[\protect\citeauthoryear{Cram{\'e}r and Leadbetter}{Cram{\'e}r and
  Leadbetter}{1967}]{cramer2013stationary}
Cram{\'e}r, H. and M.~R. Leadbetter (1967).
\newblock {\em Stationary and Related Stochastic Processes: Sample Function
  Properties and Their Applications}.
\newblock Courier Corporation.

\bibitem[\protect\citeauthoryear{Fan and Gijbels}{Fan and
  Gijbels}{1996}]{fan1996local}
Fan, J. and I.~Gijbels (1996).
\newblock {\em Local Polynomial Modeling and Its Applications: Monographs on
  Statistics and Applied Probability}, Volume~66.
\newblock CRC Press.

\bibitem[\protect\citeauthoryear{Jiang, Nguyen, and Rao}{Jiang
  et~al.}{2011}]{jiang2011best}
Jiang, J., T.~Nguyen, and J.~S. Rao (2011).
\newblock Best predictive small area estimation.
\newblock {\em Journal of the American Statistical Association\/}~{\em
  106\/}(494), 732--745.

\bibitem[\protect\citeauthoryear{Johnson, Moore, and Ylvisaker}{Johnson
  et~al.}{1990}]{johnson1990minimax}
Johnson, M.~E., L.~M. Moore, and D.~Ylvisaker (1990).
\newblock Minimax and maximin distance designs.
\newblock {\em Journal of Statistical Planning and Inference\/}~{\em 26\/}(2),
  131--148.

\bibitem[\protect\citeauthoryear{Matheron}{Matheron}{1963}]{matheron1963principles}
Matheron, G. (1963).
\newblock Principles of geostatistics.
\newblock {\em Economic Geology\/}~{\em 58\/}(8), 1246--1266.

\bibitem[\protect\citeauthoryear{Pollard}{Pollard}{1990}]{pollard1990empirical}
Pollard, D. (1990).
\newblock Empirical processes: theory and applications.
\newblock In {\em NSF-CBMS Regional Conference Series in Probability and
  Statistics}, pp.\  i--86. JSTOR.

\bibitem[\protect\citeauthoryear{Rao and Molina}{Rao and
  Molina}{2015}]{rao2015small}
Rao, J. N.~K. and I.~Molina (2015).
\newblock {\em Small-Area Estimation}.
\newblock Wiley Online Library.

\bibitem[\protect\citeauthoryear{Rasmussen}{Rasmussen}{2006}]{rasmussen2006gaussian}
Rasmussen, C.~E. (2006).
\newblock {\em Gaussian Processes for Machine Learning}.
\newblock MIT Press.

\bibitem[\protect\citeauthoryear{Ritter}{Ritter}{2000}]{ritter2007average}
Ritter, K. (2000).
\newblock {\em Average-case Analysis of Numerical Problems}.
\newblock Springer.

\bibitem[\protect\citeauthoryear{Sacks, Welch, Mitchell, and Wynn}{Sacks
  et~al.}{1989}]{sacks1989design}
Sacks, J., W.~J. Welch, T.~J. Mitchell, and H.~P. Wynn (1989).
\newblock Design and analysis of computer experiments.
\newblock {\em Statistical Science\/}~{\em 4}, 409--423.

\bibitem[\protect\citeauthoryear{Santner, Williams, and Notz}{Santner
  et~al.}{2003}]{santner2013design}
Santner, T.~J., B.~J. Williams, and W.~I. Notz (2003).
\newblock {\em The Design and Analysis of Computer Experiments}.
\newblock Springer Science \& Business Media.

\bibitem[\protect\citeauthoryear{Stein}{Stein}{1988}]{stein1988asymptotically}
Stein, M.~L. (1988).
\newblock Asymptotically efficient prediction of a random field with a
  misspecified covariance function.
\newblock {\em The Annals of Statistics\/}~{\em 16\/}(1), 55--63.

\bibitem[\protect\citeauthoryear{Stein}{Stein}{1990a}]{stein1990bounds}
Stein, M.~L. (1990a).
\newblock Bounds on the efficiency of linear predictions using an incorrect
  covariance function.
\newblock {\em The Annals of Statistics\/}, 1116--1138.

\bibitem[\protect\citeauthoryear{Stein}{Stein}{1990b}]{stein1990uniform}
Stein, M.~L. (1990b).
\newblock Uniform asymptotic optimality of linear predictions of a random field
  using an incorrect second-order structure.
\newblock {\em The Annals of Statistics\/}, 850--872.

\bibitem[\protect\citeauthoryear{Stein}{Stein}{1993}]{stein1993simple}
Stein, M.~L. (1993).
\newblock A simple condition for asymptotic optimality of linear predictions of
  random fields.
\newblock {\em Statistics \& Probability Letters\/}~{\em 17\/}(5), 399--404.

\bibitem[\protect\citeauthoryear{Stein}{Stein}{1999}]{stein2012interpolation}
Stein, M.~L. (1999).
\newblock {\em Interpolation of Spatial Data: Some Theory for Kriging}.
\newblock Springer Science \& Business Media.

\bibitem[\protect\citeauthoryear{Tuo and Wu}{Tuo and
  Wu}{2015}]{tuo2015efficient}
Tuo, R. and C.~F.~J. Wu (2015).
\newblock Efficient calibration for imperfect computer models.
\newblock {\em The Annals of Statistics\/}~{\em 43\/}(6), 2331--2352.

\bibitem[\protect\citeauthoryear{van~der Vaart and van Zanten}{van~der Vaart
  and van Zanten}{2008}]{van2008reproducing}
van~der Vaart, A.~W. and J.~H. van Zanten (2008).
\newblock {Reproducing kernel Hilbert spaces of Gaussian priors}.
\newblock In {\em Pushing the Limits of Contemporary Statistics: Contributions
  in Honor of Jayanta K. Ghosh}, pp.\  200--222. Institute of Mathematical
  Statistics.

\bibitem[\protect\citeauthoryear{van~der Vaart and Wellner}{van~der Vaart and
  Wellner}{1996}]{van1996weak}
van~der Vaart, A.~W. and J.~A. Wellner (1996).
\newblock {\em Weak Convergence and Empirical Processes}.
\newblock Springer.

\bibitem[\protect\citeauthoryear{Wahba}{Wahba}{1990}]{wahba1990spline}
Wahba, G. (1990).
\newblock {\em Spline Models for Observational Data}, Volume~59.
\newblock SIAM.

\bibitem[\protect\citeauthoryear{Wendland}{Wendland}{2004}]{wendland2004scattered}
Wendland, H. (2004).
\newblock {\em Scattered Data Approximation}, Volume~17.
\newblock Cambridge University Press.

\bibitem[\protect\citeauthoryear{Wu and Hamada}{Wu and
  Hamada}{2009}]{wu2011experiments}
Wu, C. F.~J. and M.~S. Hamada (2009).
\newblock {\em Experiments: Planning, Analysis, and Optimization\/} (2nd ed.).
\newblock John Wiley \& Sons.

\bibitem[\protect\citeauthoryear{Wu and Schaback}{Wu and
  Schaback}{1993}]{wu1993local}
Wu, Z. and R.~Schaback (1993).
\newblock Local error estimates for radial basis function interpolation of
  scattered data.
\newblock {\em IMA Journal of Numerical Analysis\/}~{\em 13\/}(1), 13--27.

\bibitem[\protect\citeauthoryear{Xiu}{Xiu}{2010}]{xiu2010numerical}
Xiu, D. (2010).
\newblock {\em Numerical Methods for Stochastic Computations: A Spectral Method
  Approach}.
\newblock Princeton University Press.

\bibitem[\protect\citeauthoryear{Yakowitz and Szidarovszky}{Yakowitz and
  Szidarovszky}{1985}]{yakowitz1985comparison}
Yakowitz, S. and F.~Szidarovszky (1985).
\newblock A comparison of kriging with nonparametric regression methods.
\newblock {\em Journal of Multivariate Analysis\/}~{\em 16\/}(1), 21--53.

\bibitem[\protect\citeauthoryear{Ying}{Ying}{1991}]{ying1991asymptotic}
Ying, Z. (1991).
\newblock Asymptotic properties of a maximum likelihood estimator with data
  from a gaussian process.
\newblock {\em Journal of Multivariate Analysis\/}~{\em 36\/}(2), 280--296.

\bibitem[\protect\citeauthoryear{Zhang}{Zhang}{2004}]{zhang2004inconsistent}
Zhang, H. (2004).
\newblock Inconsistent estimation and asymptotically equal interpolations in
  model-based geostatistics.
\newblock {\em Journal of the American Statistical Association\/}~{\em
  99\/}(465), 250--261.

\end{thebibliography}

\bibliographystyle{chicago}

\end{document}